\theoremstyle{plain}
\newtheorem{theorem}{Theorem}
\newtheorem{lemma}[theorem]{Lemma}
\newtheorem{proposition}[theorem]{Proposition}
\newtheorem*{maintheorem}{Theorem~\ref{theorem:mainthm}}
\theoremstyle{definition}
\newtheorem{definition}[theorem]{Definition}
\newtheorem{example}[theorem]{Example}
\theoremstyle{remark}
\newtheorem{remark}[theorem]{Remark}
\newtheorem{claim}[theorem]{Claim}
\newtheorem*{acknowledgments}{Acknowledgments}
\tikzset{myarrow/.style={->,>=stealth,line width=0.5pt}}
\newcommand{\su}[1]{\mathcal{S}_{#1}}
\newcommand{\sv}[3]{\mathcal{S}_{#1}^{#2, #3}}
\newcommand{\xu}[1]{\mathcal{X}_{#1}}
\newcommand{\xv}[3]{\mathcal{X}_{#1}^{#2, #3}}
\newcommand{\sxu}{\boldsymbol{\Sigma}}
\newcommand{\sxv}[2]{\boldsymbol{\Sigma}^{#1, #2}}
\newcommand{\sxus}[1]{\boldsymbol{\Sigma}_{#1}}
\newcommand{\sxvs}[3]{\boldsymbol{\Sigma}_{#1}^{#2, #3}}
\DeclareMathOperator{\sgn}{sgn}
\DeclareMathOperator{\Gal}{Gal}
\begin{document}

\title{Simultaneously preperiodic integers for quadratic polynomials}

\author{Valentin Huguin}
\address{Institut de Mathématiques de Toulouse, UMR 5219, Université de Toulouse, CNRS, UPS, F-31062 Toulouse Cedex 9, France}
\email{valentin.huguin@math.univ-toulouse.fr}

\subjclass[2010]{Primary 37P05; Secondary 37F45, 37P35}

\begin{abstract}
In this article, we study the set of parameters $c \in \mathbb{C}$ for which two given complex numbers $a$ and $b$ are simultaneously preperiodic for the quadratic polynomial $f_{c}(z) = z^{2} +c$. Combining complex-analytic and arithmetic arguments, Baker and DeMarco showed that this set of parameters is infinite if and only if $a^{2} = b^{2}$. Recently, Buff answered a question of theirs, proving that the set of parameters $c \in \mathbb{C}$ for which both $0$ and $1$ are preperiodic for $f_{c}$ is equal to $\lbrace -2, -1, 0 \rbrace$. Following his approach, we complete the description of these sets when $a$ and $b$ are two given integers with $\lvert a \rvert \neq \lvert b \rvert$.
\end{abstract}

\maketitle

\section{Introduction}

For $c \in \mathbb{C}$, let $f_{c} \colon \mathbb{C} \rightarrow \mathbb{C}$ be the complex quadratic map \[ f_{c} \colon z \mapsto z^{2} +c \, \text{.} \]

Given a point $z \in \mathbb{C}$, we study the sequence $\left( f_{c}^{\circ n}(z) \right)_{n \geq 0}$ of iterates of $f_{c}$ at $z$. The set $\left\lbrace f_{c}^{\circ n}(z) : n \geq 0 \right\rbrace$ is called the \emph{forward orbit} of $z$ under $f_{c}$.

The point $z$ is said to be \emph{periodic} for $f_{c}$ if there exists an integer $p \geq 1$ such that $f_{c}^{\circ p}(z) = z$. The least such integer $p$ is called the \emph{period} of $z$. The point $z$ is said to be \emph{preperiodic} for $f_{c}$ if its forward orbit is finite or, equivalently, if there is an integer $k \geq 0$ such that $f_{c}^{\circ k}(z)$ is periodic for $f_{c}$. The smallest integer $k$ with this property is called the \emph{preperiod} of $z$.

\begin{definition}
For $a \in \mathbb{C}$, let $\su{a}$ be the set defined by \[ \su{a} = \left\lbrace c \in \mathbb{C} : a \text{ is preperiodic for } f_{c} \right\rbrace \, \text{.} \]
\end{definition}

In this paper, we wish to examine these sets of parameters.

For $n \geq 0$, let $F_{n} \in \mathbb{Z}[c, z]$ be the polynomial given by \[ F_{n}(c, z) = f_{c}^{\circ n}(z) \, \text{.} \] The sequence $\left( F_{n} \right)_{n \geq 0}$ satisfies $F_{0}(c, z) = z$ and the recursion formulas \[ F_{n}(c, z) = F_{n -1}\left( c, z^{2} +c \right) = F_{n -1}(c, z)^{2} +c \quad \text{for} \quad n \geq 1 \, \text{.} \] In particular, when $n \geq 1$, the polynomial $F_{n}$ is monic in $c$ of degree $2^{n -1}$ and monic in $z$ of degree $2^{n}$.

Now, given a point $a \in \mathbb{C}$, define~-- for $k \geq 0$ and $p \geq 1$~-- the set \[ \sv{a}{k}{p} = \left\lbrace c \in \mathbb{C} : F_{k +p}(c, a) = F_{k}(c, a) \right\rbrace \, \text{.} \] For all $k \geq 0$ and $p \geq 1$, the set $\sv{a}{k}{p}$ contains at most $2^{k +p -1}$ elements and consists of the parameters $c \in \mathbb{C}$ for which the point $a$ is preperiodic for $f_{c}$ with preperiod less than or equal to $k$ and period dividing $p$.

In particular, it follows that the set \[ \su{a} = \bigcup_{k \geq 0, \, p \geq 1} \sv{a}{k}{p} \] is countable.

\begin{proposition}
\label{proposition:infinite}
For every $a \in \mathbb{C}$, the set $\su{a}$ is infinite.
\end{proposition}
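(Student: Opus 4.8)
The plan is to argue by contradiction: assume $\su{a}$ is finite and derive an impossible growth in the multiplicities of the polynomials $F_{n}(c, a) - a$.

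The first step is to pass from $\su{a}$ to the smaller set $P_{a} := \lbrace c \in \mathbb{C} : a \text{ is periodic for } f_{c} \rbrace$. If $\su{a}$ is finite, so is $P_{a}$, so the periods of $a$ that occur are bounded; taking $N$ to be a common multiple of them, one gets $P_{a} = \lbrace c : F_{N}(c, a) = a \rbrace$, and more generally $\lbrace c : F_{n}(c, a) = a \rbrace = P_{a}$ for every positive multiple $n$ of $N$ (if $a$ is periodic for $f_{c}$, its period divides $N$, hence $n$; the converse is clear). Since $F_{n}(c, a) - a$ is monic in $c$ of degree $2^{n -1}$ with zero set the fixed finite set $P_{a}$, one may write $F_{n}(c, a) - a = \prod_{c_{0} \in P_{a}} (c - c_{0})^{e_{0}(n)}$ with $\sum_{c_{0} \in P_{a}} e_{0}(n) = 2^{n -1}$; as $P_{a}$ is finite, $e_{0}(n)$ must be unbounded, over multiples $n$ of $N$, for at least one $c_{0} \in P_{a}$.

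The decisive step is to rule this out by showing that for each fixed $c_{0}$ the multiplicity $e_{0}(n) = \mathrm{ord}_{c = c_{0}}\!\big(F_{n}(c, a) - a\big)$ stays bounded independently of $n$. Here I would set $q$ equal to the period of $a$ under $f_{c_{0}}$ and $\lambda$ equal to the multiplier of that cycle, and use $F_{n}(c, a) = \big(f_{c}^{\circ q}\big)^{\circ (n / q)}(a)$ for $q \mid n$, reducing the question to the order of vanishing, in the parameter, of the iterates of a family of maps fixing $a$ at $c = c_{0}$ with multiplier $\lambda$ there. A Taylor expansion around $c = c_{0}$ should give, writing $t = c - c_{0}$ and $r = \mathrm{ord}_{t = 0}\big(f_{c_{0} + t}^{\circ q}(a) - a\big)$, the relation $f_{c_{0} + t}^{\circ q j}(a) - a = \big(f_{c_{0} + t}^{\circ q}(a) - a\big)\big(1 + \lambda + \cdots + \lambda^{j -1}\big) + O\big(t^{2 r}\big)$, so that $e_{0}(q j) = r$ whenever $1 + \lambda + \cdots + \lambda^{j -1} \neq 0$. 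The multiplicity can thus fail to be bounded only if $\lambda$ is a root of unity of some order $k \geq 2$; in that case I would re-run the expansion for $f_{c}^{\circ q k}$, whose multiplier at $a$ is $\lambda^{k} = 1$, to conclude that $e_{0}(n)$ equals the fixed value $\mathrm{ord}_{c = c_{0}}\big(F_{q k}(c, a) - a\big)$ for every multiple $n$ of $q k$. Either way $e_{0}(n) \leq \max\!\big(\mathrm{ord}_{c = c_{0}}(F_{q}(c, a) - a),\, \mathrm{ord}_{c = c_{0}}(F_{q k}(c, a) - a)\big)$ for all multiples $n$ of $q$, which is finite, contradicting the unboundedness of $e_{0}(n)$ and proving the proposition.

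The main difficulty I expect is in this last step, specifically in the case where $\lambda$ is a nontrivial root of unity: there $1 + \lambda + \cdots + \lambda^{j -1}$ vanishes for infinitely many $j$, the naive estimate degenerates, and one must genuinely replace $f_{c}^{\circ q}$ by its \emph{parabolic} iterate $f_{c}^{\circ q k}$ before re-running the computation. (When $a$ is an algebraic integer there is a shortcut: a Gleason-type reduction modulo $2$ shows that $F_{n}(c, a) - a$ is separable, hence has $2^{n -1}$ distinct roots, among which at least $2^{n -1} - \sum_{d \mid n, \, d < n} 2^{d -1} \geq 1$ have exact period $n$; but this is unavailable for general $a \in \mathbb{C}$.)
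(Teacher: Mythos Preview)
Your argument is correct and yields a valid proof, but it follows a genuinely different route from the paper's. The paper never looks at multiplicities or multipliers: it works instead with the increasing chain $\sv{a}{n}{1}$, uses the factorization
\[
F_{n+2}(c,a)-F_{n+1}(c,a)=\bigl(F_{n+1}(c,a)-F_{n}(c,a)\bigr)\bigl(F_{n+1}(c,a)+F_{n}(c,a)\bigr),
\]
and observes that once the chain stabilizes every root $\gamma$ of $F_{n+1}(c,a)+F_{n}(c,a)$ must also kill $F_{n+1}(c,a)-F_{n}(c,a)$, forcing $F_{n+1}(\gamma,a)=F_{n}(\gamma,a)=0$ and hence $\gamma=0$; a one-line derivative computation at $c=0$ then produces the contradiction. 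That argument is completely elementary and tailored to the identity $f_{c}(z)=z^{2}+c$. Your multiplicity-versus-multiplier argument is more structural: it would transfer essentially unchanged to any one-parameter family of polynomials for which $\deg_{c}\bigl(F_{n}(c,a)-a\bigr)\to\infty$, the only dynamical input being that the order of vanishing at a fixed $c_{0}$ is governed by the multiplier of the cycle through $a$. So the paper buys brevity and elementarity; your approach buys generality.

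One small technical correction: with the constant $\lambda$ (rather than the $t$-dependent $\mu(t)=\partial_{z}f_{c_{0}+t}^{\circ q}(a)$) in the geometric-sum factor, the remainder in your Taylor expansion is only $O\!\left(t^{\,r+1}\right)$, not $O\!\left(t^{\,2r}\right)$, because $\mu(t)-\lambda=O(t)$ contributes a term of size $t\cdot h(t)=O\!\left(t^{\,r+1}\right)$. This does not affect the conclusion, since $r+1>r$ is all that is needed to deduce $e_{0}(qj)=r$ when $1+\lambda+\dotsb+\lambda^{\,j-1}\neq 0$; and your handling of the parabolic case (passing to $f_{c}^{\circ qk}$, whose multiplier is $1$, so the geometric sum becomes $j\neq 0$) is exactly right.
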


\begin{proof}
To obtain a contradiction, suppose that $\su{a}$ contains finitely many elements. Then, since the sequence $\left( \sv{a}{n}{1} \right)_{n \geq 0}$ is increasing, there exists an integer $N \geq 0$ such that $\sv{a}{n +1}{1} = \sv{a}{n}{1}$ for all $n \geq N$. Now, note that, for every $n \geq 0$, we have \[ F_{n +2}(c, a) -F_{n +1}(c, a) = \left( F_{n +1}(c, a) -F_{n}(c, a) \right) \left( F_{n +1}(c, a) +F_{n}(c, a) \right) \, \text{.} \] It follows that, if $n \geq N$ and $\gamma$ is a root of the polynomial $F_{n +1}(c, a) +F_{n}(c, a)$, then \[ F_{n +1}(\gamma, a) -F_{n}(\gamma, a) = F_{n +1}(\gamma, a) +F_{n}(\gamma, a) = 0 \, \text{,} \] and hence $F_{n +1}(\gamma, a) = F_{n}(\gamma, a) = 0$, which yields $\gamma = 0$. Therefore, we have $F_{n}(0, a) = 0$ and $F_{n +1}(c, a) +F_{n}(c, a) = c^{2^{n}}$ for all $n \geq N$. In particular, we get \[ \frac{\partial\left( F_{N +2} +F_{N +1} \right)}{\partial c}(0, a) = 2 \frac{\partial F_{N +1}}{\partial c}(0, a) F_{N +1}(0, a) +2 \frac{\partial F_{N}}{\partial c}(0, a) F_{N}(0, a) +2 = 2 \, \text{,} \] which contradicts the fact that $F_{N +2}(c, a) +F_{N +1}(c, a) = c^{2^{N +1}}$.
\end{proof}

\begin{remark}
Note that, if $a \in \mathbb{C}$, then $f_{c}(a) = f_{c}(-a)$ for all $c \in \mathbb{C}$. Consequently, we have $\su{a} = \su{-a}$ and $\sv{a}{k}{p} = \sv{-a}{k}{p}$ for all $k \geq 1$ and $p \geq 1$.
\end{remark}

\begin{example}
Assume that $a \in \mathbb{C}$. Then (see Figure~\ref{figure:genexu}) we have 
\begin{align*}
\sv{a}{0}{1} & = \left\lbrace -a^{2} +a \right\rbrace \, \text{,}\\
\sv{a}{1}{1} & = \left\lbrace -a^{2} -a, -a^{2} +a \right\rbrace \, \text{,}\\
\sv{a}{0}{2} & = \left\lbrace -a^{2} -a -1, -a^{2} +a \right\rbrace \, \text{,}\\
\sv{a}{1}{2} & = \left\lbrace -a^{2} -a -1, -a^{2} -a, -a^{2} +a -1, -a^{2} +a \right\rbrace \, \text{.}
\end{align*}
\end{example}

\begin{figure}
\fbox{\begin{tikzpicture}
\begin{scope}
\node (Z0) at (2,0) {$a$};
\node[right] (C) at (5,0) {for $c = -a^{2} +a$,};
\draw[myarrow] (Z0) to[out=45,in=0,loop] node[above right]{$f_{c}$} (Z0);
\end{scope}
\begin{scope}[yshift=-1.5cm]
\node (Z0) at (1,0) {$a$};
\node (Z1) at (3,0) {$-a$};
\node[right] (C) at (5,0) {for $c = -a^{2} -a$,};
\draw[myarrow] (Z0) to[bend left] node[above]{$f_{c}$} (Z1);
\draw[myarrow] (Z1) to[out=45,in=0,loop] node[above right]{$f_{c}$} (Z1);
\end{scope}
\begin{scope}[yshift=-3cm]
\node (Z0) at (1,0) {$a$};
\node (Z1) at (3,0) {$-a -1$};
\node[right] (C) at (5,0) {for $c = -a^{2} -a -1$,};
\draw[myarrow] (Z0) to[bend left] node[above]{$f_{c}$} (Z1);
\draw[myarrow] (Z1) to[bend left] node[below]{$f_{c}$} (Z0);
\end{scope}
\begin{scope}[yshift=-5cm]
\node (Z0) at (0,0) {$a$};
\node (Z1) at (2,0) {$a -1$};
\node (Z2) at (4,0) {$-a$};
\node[right] (C) at (5,0) {for $c = -a^{2} +a -1$.};
\draw[myarrow] (Z0) to[bend left] node[above]{$f_{c}$} (Z1);
\draw[myarrow] (Z1) to[bend left] node[above]{$f_{c}$} (Z2);
\draw[myarrow] (Z2) to[bend left] node[below]{$f_{c}$} (Z1);
\end{scope}
\end{tikzpicture}}
\caption{Some parameters $c \in \mathbb{C}$ for which a given complex number $a$ is preperiodic for $f_{c}$.}
\label{figure:genexu}
\end{figure}
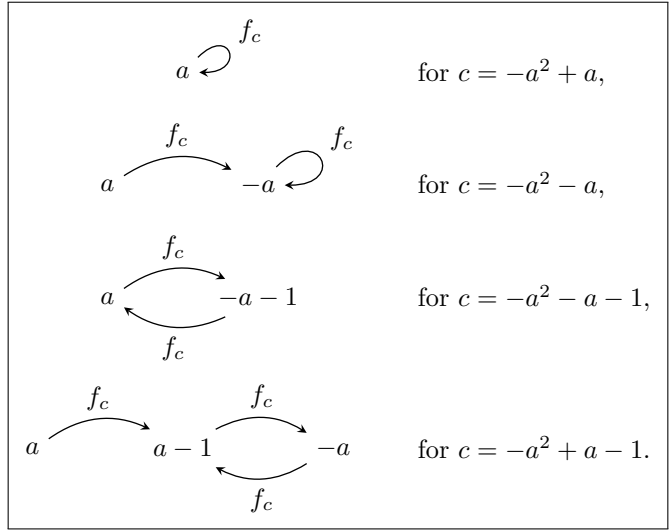

Here, the problem we are interested in is the description of the sets $\su{a} \cap \su{b}$ when $a$ and $b$ are two given complex numbers.

\begin{example}
\label{example:genexv}
Suppose that $a \in \mathbb{C}$. Then (see Figure~\ref{figure:genexv}) we have \[ -a^{2} -a -1 = -(a +1)^{2} +(a +1) -1 \in \sv{a}{0}{2} \cap \sv{a +1}{1}{2} \] and \[ -a^{2} -a = -(a +1)^{2} +(a +1) \in \sv{a}{1}{1} \cap \sv{a +1}{0}{1} \, \text{.} \]
\end{example}

\begin{figure}
\fbox{\begin{tikzpicture}
\begin{scope}
\node (Z0) at (1,0) {$a +1$};
\node (Z1) at (3,0) {$a$};
\node (Z2) at (5,0) {$-a -1$};
\node[right] (C) at (7,0) {for $c = -a^{2} -a -1$,};
\draw[myarrow] (Z0) to[bend left] node[above]{$f_{c}$} (Z1);
\draw[myarrow] (Z1) to[bend left] node[above]{$f_{c}$} (Z2);
\draw[myarrow] (Z2) to[bend left] node[below]{$f_{c}$} (Z1);
\end{scope}
\begin{scope}[yshift=-2cm]
\node (Z0) at (0,0) {$a$};
\node (Z1) at (2,0) {$-a$};
\node (AND) at (4,0) {and};
\node (Z2) at (5,0) {$a +1$};
\node[right] (C) at (7,0) {for $c = -a^{2} -a$.};
\draw[myarrow] (Z0) to[bend left] node[above]{$f_{c}$} (Z1);
\draw[myarrow] (Z1) to[out=45,in=0,loop] node[above right]{$f_{c}$} (Z1);
\draw[myarrow] (Z2) to[out=45,in=0,loop] node[above right]{$f_{c}$} (Z2);
\end{scope}
\end{tikzpicture}}
\caption{Two parameters $c \in \mathbb{C}$ for which $a$ and $a +1$ are simultaneously preperiodic for $f_{c}$ when $a$ is a given complex number.}
\label{figure:genexv}
\end{figure}
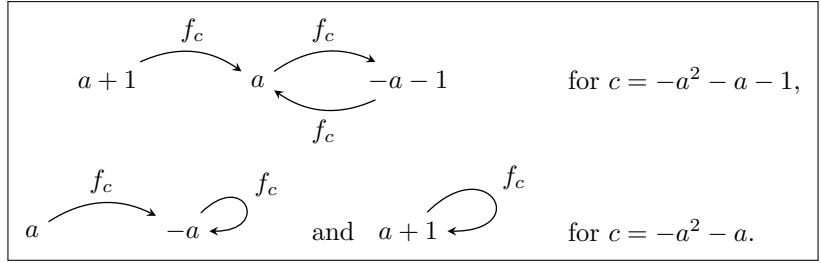

\begin{example}
\label{example:partexv}
We have $-2 \in \sv{0}{2}{1} \cap \sv{1}{1}{1}$, $-1 \in \sv{0}{0}{2} \cap \sv{1}{1}{2}$ and $0 \in \sv{0}{0}{1} \cap \sv{1}{0}{1}$ (see Figure~\ref{figure:partexv}).
\end{example}

\begin{figure}
\fbox{\begin{tikzpicture}
\begin{scope}
\node (Z0) at (0,0) {$0$};
\node (Z1) at (2,0) {$-2$};
\node (Z2) at (4,0) {$2$};
\node (AND) at (6,0) {and};
\node (Z3) at (7,0) {$1$};
\node (Z4) at (9,0) {$-1$};
\draw[myarrow] (Z0) to[bend left] node[above]{$f_{-2}$} (Z1);
\draw[myarrow] (Z1) to[bend left] node[above]{$f_{-2}$} (Z2);
\draw[myarrow] (Z2) to[out=45,in=0,loop] node[above right]{$f_{-2}$} (Z2);
\draw[myarrow] (Z3) to[bend left] node[above]{$f_{-2}$} (Z4);
\draw[myarrow] (Z4) to[out=45,in=0,loop] node[above right]{$f_{-2}$} (Z4);
\end{scope}
\begin{scope}[yshift=-1.5cm]
\node (Z0) at (3,0) {$1$};
\node (Z1) at (5,0) {$0$};
\node (Z2) at (7,0) {$-1$};
\draw[myarrow] (Z0) to[bend left] node[above]{$f_{-1}$} (Z1);
\draw[myarrow] (Z1) to[bend left] node[above]{$f_{-1}$} (Z2);
\draw[myarrow] (Z2) to[bend left] node[below]{$f_{-1}$} (Z1);
\end{scope}
\begin{scope}[yshift=-3.5cm]
\node (Z0) at (3,0) {$0$};
\node (AND) at (5,0) {and};
\node (Z1) at (6,0) {$1$};
\draw[myarrow] (Z0) to[out=45,in=0,loop] node[above right]{$f_{0}$} (Z0);
\draw[myarrow] (Z1) to[out=45,in=0,loop] node[above right]{$f_{0}$} (Z1);
\end{scope}
\end{tikzpicture}}
\caption{Three parameters $c \in \mathbb{C}$ for which both $0$ and $1$ are preperiodic for $f_{c}$.}
\label{figure:partexv}
\end{figure}
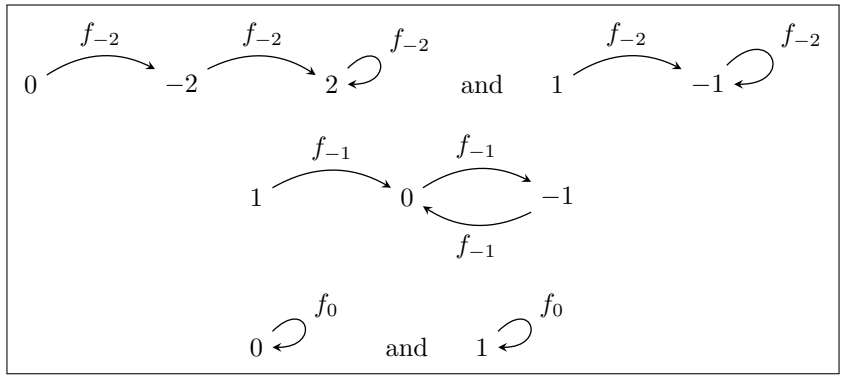

Since the sets $\su{a}$ are countably infinite (see Proposition~\ref{proposition:infinite}), we may wonder whether the sets $\su{a} \cap \su{b}$ are infinite. This question was answered by Baker and DeMarco in~\cite{BDM2011}. Using potential theory and an equidistribution result for points of small height with respect to an adelic height function, they proved that the set $\su{a} \cap \su{b}$ is infinite if and only if $a^{2} = b^{2}$.

As they pointed out, their proof is not effective and does not provide any estimate on the cardinality of these sets when they are finite. In their article, Baker and DeMarco conjectured that $-2$, $-1$ and $0$ were the only parameters $c \in \mathbb{C}$ for which $0$ and $1$ are simultaneously preperiodic for $f_{c}$ (see Example~\ref{example:partexv}). Using localization properties of the set of parameters $c \in \mathbb{C}$ for which both $0$ and $1$ have bounded forward orbit under $f_{c}$ and the fact that $0$ is the only parameter $c \in \mathbb{C}$ that is contained in the main cardioid of the Mandelbrot set and for which $0$ is preperiodic for $f_{c}$, Buff gave an elementary proof of their conjecture in~\cite{B2018}.

Following his approach, we complete the description of the sets $\su{a} \cap \su{b}$ when $a$ and $b$ are two given integers with $\lvert a \rvert \neq \lvert b \rvert$. More precisely, we prove the following theorem, which asserts that Example~\ref{example:genexv} and Example~\ref{example:partexv} present all the parameters $c \in \mathbb{C}$ for which two given distinct and non-opposite integers are simultaneously preperiodic for the polynomial $f_{c}$:

\begin{theorem}
\label{theorem:mainthm}
Assume that $a$ and $b$ are two integers with $\lvert b \rvert > \lvert a \rvert$. Then 
\begin{itemize}
\item either $a = 0$, $\lvert b \rvert = 1$ and $\su{a} \cap \su{b} = \lbrace -2, -1, 0 \rbrace$,
\item or $a = 0$, $\lvert b \rvert = 2$ and $\su{a} \cap \su{b} = \lbrace -2 \rbrace$,
\item or $\lvert a \rvert \geq 1$, $\lvert b \rvert = \lvert a \rvert +1$ and $\su{a} \cap \su{b} = \left\lbrace -a^{2} -\lvert a \rvert -1, -a^{2} -\lvert a \rvert \right\rbrace$,
\item or $\lvert b \rvert > \max\left\lbrace 2, \lvert a \rvert +1 \right\rbrace$ and $\su{a} \cap \su{b} = \varnothing$.
\end{itemize}
\end{theorem}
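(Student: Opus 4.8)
The plan is to follow Buff's strategy: translate the condition ``$a$ and $b$ simultaneously preperiodic'' into statements about integrality and size of orbits, and squeeze the possibilities using elementary estimates together with a single piece of complex dynamics (the main cardioid of the Mandelbrot set). First I would observe that if $c \in \su{a}$ with $a \in \mathbb{Z}$, then every iterate $f_c^{\circ n}(a)$ is an algebraic integer, so the preperiodic orbit of $a$ consists of algebraic integers; and if in addition $c \in \su{b}$ with $b \in \mathbb{Z}$, then $c = f_c(a) - a^2 = f_c(b) - b^2$ lies in $\mathbb{Z}$ as soon as both orbits return to their integer starting points---more carefully, one uses that $a, b \in \mathbb{Z}$ forces $c$ to be an algebraic integer that is a root of a monic integer polynomial of the form $F_{k+p}(c,a) - F_k(c,a)$, and by comparing with the analogous polynomial for $b$ one concludes, via a resultant or gcd argument (exactly as in Buff), that $c$ is a \emph{rational} integer, i.e. $c \in \mathbb{Z}$. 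So the whole problem reduces to: for which $c \in \mathbb{Z}$ are two given integers $a, b$ both preperiodic for $f_c$?

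Once $c \in \mathbb{Z}$, the orbit of any integer $z$ under $f_c$ is a sequence of integers, and boundedness is equivalent to preperiodicity. The key elementary estimate is that if $|z| \geq |c| + 1$ and $|z| \geq 2$ then $|f_c(z)| = |z^2 + c| \geq |z|^2 - |c| > |z|$, so the orbit escapes to infinity; hence for $z$ to be preperiodic one needs every iterate to stay in the ``trapped'' region, which for $|c| \geq 3$ forces the bound $|z| \le |c|$ roughly, and iterating the inequality pins $|c|$ down. Concretely, I would split into cases on $|c|$: for $|c| \geq 3$ one shows that preperiodicity of $z$ forces $|z|^2 - |c| \le |c|$, i.e. $|z| \le \sqrt{2|c|}$, and then that the only way both $a$ and $b$ can be trapped---given $|b| > |a|$---pushes $|b|$ down to at most $|a| + 1$ and simultaneously bounds $|c|$; the finitely many remaining small values $c \in \{-2, -1, 0, 1, 2\}$ and the small cases $|a| \le $ a constant are then checked by hand, matching Examples~\ref{example:genexv} and~\ref{example:partexv}. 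This is where the main-cardioid input enters: to rule out, say, positive $c$ or $c$ with $|c|$ in an awkward intermediate range, one uses that the only $c$ in the main cardioid $\{ c : |1 - \sqrt{1 - 4c}| < 1 \}$ with $0$ (equivalently any integer) preperiodic for $f_c$ is $c = 0$, together with the fact that a preperiodic critical-orbit-type point forces $c$ outside the interior of hyperbolic components, exactly as Buff uses for $a = 0$, $b = 1$.

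The structural heart of the argument is then the following dichotomy for $c \in \mathbb{Z}$ with $a, b$ both preperiodic and $|b| > |a|$: writing $\alpha = f_c(a) = a^2 + c$ and $\beta = f_c(b) = b^2 + c$, both $\alpha$ and $\beta$ lie in the (finite, integer) filled-in region, and $\beta - \alpha = b^2 - a^2 = (|b|-|a|)(|b|+|a|) \geq |b| + |a|$. Since the trapped integers all have absolute value at most some explicit $R(c)$ (of order $|c|$), we get $|b| + |a| \le 2R(c) \lesssim 2|c|$, while $|c| = |\alpha - a^2| \le R(c) + a^2 \lesssim |c| + a^2$; combining these forces either $|c|$ small or $|a|$ small, and in either regime the problem becomes a finite check. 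The two ``generic'' infinite families $\su{a}\cap\su{b} = \{-a^2 - |a| - 1, -a^2 - |a|\}$ for $|b| = |a|+1$ come out because for those two values of $c$ the orbit of $a$ and of $b = \pm(|a|+1)$ collide into a common short cycle, as displayed in Figure~\ref{figure:genexv}; I would verify directly that no \emph{other} $c$ can make both preperiodic when $|b| = |a| + 1$, again by the escape estimate forcing $|c| \le |a|^2 + O(|a|)$ and then a short analysis of where the two orbits can land.

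The step I expect to be the main obstacle is making the case analysis on $|c|$ genuinely finite and airtight without the global equidistribution machinery of Baker--DeMarco: the escape estimates only bite when $|c|$ is not too small relative to $|a|$, so one has to handle a band of ``medium'' parameters---roughly $|c|$ comparable to $a^2$---where the orbit of $a$ can be bounded but the orbit of $b$ is delicate, and it is precisely here that one must invoke the main-cardioid/hyperbolic-component obstruction (``an integer cannot be strictly preperiodic for a parameter in the interior of a hyperbolic component except at the center, and the only such center that is an integer with the relevant small orbit is $c = 0$'') to eliminate the surviving parameters. Packaging that dynamical input cleanly, and checking that it covers exactly the parameters not killed by the arithmetic-size estimates, is the crux; the rest is bookkeeping over the explicitly bounded finite set of remaining $(a, b, c)$.
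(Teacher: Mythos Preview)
Your proposal has a genuine gap at its first step: the reduction to $c \in \mathbb{Z}$. You assert that ``via a resultant or gcd argument (exactly as in Buff)'' one concludes $c$ is a rational integer, but no such argument exists in Buff's paper, and none is supplied here. Knowing that $c$ is a common root of two monic integer polynomials $F_{k+p}(c,a)-F_k(c,a)$ and $F_{l+q}(c,b)-F_l(c,b)$ only tells you that $c$ is an \emph{algebraic} integer; the gcd of two such polynomials can perfectly well have irrational roots, and you have no control over which $(k,p,l,q)$ occur. In fact, the statement ``$c$ is a rational integer'' is essentially the whole content of the theorem, not a preliminary observation. Everything you do afterwards---integer escape estimates, case analysis on $|c| \in \mathbb{Z}$, the main-cardioid input---presupposes this unproved reduction, so the rest of the sketch never gets off the ground.

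The paper closes this gap by a completely different mechanism. For $|b| \ge 2$ it shows (Theorem~\ref{theorem:parashift}, via the symbolic dynamics of $f_c$ for $c \le -2$) that $\su{b}$ is contained in the \emph{real} interval $[c_b^-, c_b^+]$; combined with the disk bound $|c| \le R_a$ for $c \in \su{a}$ (Proposition~\ref{proposition:paraloc}) and Galois invariance (Proposition~\ref{proposition:parathmetic}), this forces every Galois conjugate of $c$ into a real interval of length less than $2$. An elementary lemma (Lemma~\ref{lemma:mainthm}) then says an algebraic integer all of whose conjugates lie in $(m-2,m]$ must equal $m-1$ or $m$, and that is how rationality of $c$ and the precise values fall out simultaneously. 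There is no hyperbolic-component input at all for $|b|\ge 2$; the main-cardioid argument is invoked only (by reference to Buff) for the single case $a=0$, $|b|=1$. Note also that your parenthetical ``(equivalently any integer)'' is incorrect: the cardioid obstruction concerns the critical point $0$, not an arbitrary integer starting point, so it would not transfer to general $a$ in the way you suggest.
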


Our proof is elementary and uses only basic analytic and arithmetic arguments. In particular, the reader does not need to be familiar with complex dynamics.

In Section~\ref{section:dynaspace}, we reprove some well-known results on the dynamics of the polynomials $f_{c}$. In Section~\ref{section:paraspace}, we go back to the study of the parameter space and give a proof of Theorem~\ref{theorem:mainthm}.

\begin{acknowledgments}
The author would like to thank his Ph.D. advisors, Xavier Buff and Jasmin Raissy, for helpful discussions without which this paper would not exist.
\end{acknowledgments}

\section{The dynamics of the quadratic polynomials}
\label{section:dynaspace}

We shall investigate here the dynamics of the quadratic maps $f_{c} \colon \mathbb{C} \rightarrow \mathbb{C}$.

Given a parameter $c \in \mathbb{C}$, let $\xu{c}$ be the set \[ \xu{c} = \left\lbrace z \in \mathbb{C} : z \text{ is preperiodic for } f_{c} \right\rbrace \, \text{,} \] and, for $k \geq 0$ and $p \geq 1$, let $\xv{c}{k}{p}$ be the set \[ \xv{c}{k}{p} = \left\lbrace z \in \mathbb{C} : F_{k +p}(c, z) = F_{k}(c, z) \right\rbrace \, \text{.} \] For all $k \geq 0$ and $p \geq 1$, the set $\xv{c}{k}{p}$ contains at most $2^{k +p}$ elements, is invariant under $f_{c}$ and consists of the preperiodic points for $f_{c}$ with preperiod less than or equal to $k$ and period dividing $p$. In particular, we have \[ \xu{c} = \bigcup_{k \geq 0, \, p \geq 1} \xv{c}{k}{p} \, \text{.} \] Moreover, the set $\xu{c}$ is completely invariant under $f_{c}$~-- that is, for every $z \in \mathbb{C}$, $f_{c}(z) \in \xu{c}$ if and only if $z \in \xu{c}$.

\begin{remark}
Note that, if $c \in \mathbb{C}$, then $f_{c}(z) = f_{c}(-z)$ for all $z \in \mathbb{C}$. Therefore, the sets $\xu{c}$ and $\xv{c}{k}{p}$, with $k \geq 1$ and $p \geq 1$, are symmetric with respect to the origin.
\end{remark}

\begin{proposition}
\label{proposition:dynaloc}
For every $c \in \mathbb{C}$, we have \[ \xu{c} \subset \bigcap_{n \geq 0} \left\lbrace z \in \mathbb{C} : \left\lvert f_{c}^{\circ n}(z) \right\rvert \leq \rho_{c} \right\rbrace \, \text{,} \] where $\rho_{c} = \frac{1 +\sqrt{1 +4 \lvert c \rvert}}{2}$.
\end{proposition}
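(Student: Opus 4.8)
The plan is to prove the contrapositive statement: if some iterate of a point $z$ under $f_{c}$ has modulus strictly larger than $\rho_{c}$, then the forward orbit of $z$ is unbounded, hence infinite, and therefore $z \notin \xu{c}$.

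The starting observation is that $\rho_{c}$ is precisely the larger real root of $t \mapsto t^{2} -t -\lvert c \rvert$; in particular $\rho_{c} \geq 1$ and $\rho_{c}^{2} = \rho_{c} +\lvert c \rvert$. Using this, I would first establish the following expansion estimate: for every $w \in \mathbb{C}$ with $\lvert w \rvert > \rho_{c}$, one has \[ \left\lvert f_{c}(w) \right\rvert -\rho_{c} \geq \left( \lvert w \rvert^{2} -\lvert c \rvert \right) -\rho_{c} = \lvert w \rvert^{2} -\rho_{c}^{2} = \left( \lvert w \rvert -\rho_{c} \right)\left( \lvert w \rvert +\rho_{c} \right) \geq 2\left( \lvert w \rvert -\rho_{c} \right) \, \text{,} \] where the first inequality uses $\lvert f_{c}(w) \rvert = \lvert w^{2} +c \rvert \geq \lvert w \rvert^{2} -\lvert c \rvert$ and the last one uses $\lvert w \rvert +\rho_{c} > 2\rho_{c} \geq 2$. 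In particular $\lvert f_{c}(w) \rvert > \rho_{c}$, so this estimate may be iterated: writing $\delta_{n} = \lvert f_{c}^{\circ n}(w) \rvert -\rho_{c}$, an immediate induction gives $\delta_{n} \geq 2^{n}\delta_{0} > 0$ for all $n \geq 0$, and hence $\lvert f_{c}^{\circ n}(w) \rvert \to +\infty$ as $n \to +\infty$.

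To finish, suppose $z \in \xu{c}$; then its forward orbit is finite, hence bounded. If there existed $n_{0} \geq 0$ with $\lvert f_{c}^{\circ n_{0}}(z) \rvert > \rho_{c}$, then applying the estimate above to $w = f_{c}^{\circ n_{0}}(z)$ would force $\lvert f_{c}^{\circ n}(z) \rvert \to +\infty$, contradicting boundedness. Thus $\lvert f_{c}^{\circ n}(z) \rvert \leq \rho_{c}$ for every $n \geq 0$, which is exactly the claimed inclusion. The only substantive point is the elementary estimate of the first step — essentially the statement that the set $\left\lbrace z \in \mathbb{C} : \lvert z \rvert > \rho_{c} \right\rbrace$ is forward invariant under $f_{c}$ with uniform expansion away from the disc of radius $\rho_{c}$ — and I do not anticipate any real difficulty there.
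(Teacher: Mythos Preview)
Your proof is correct and follows essentially the same route as the paper: both use the reverse triangle inequality $\lvert f_{c}(w)\rvert \geq \lvert w\rvert^{2}-\lvert c\rvert$ together with the defining property of $\rho_{c}$ to show that once an iterate leaves the disc of radius $\rho_{c}$ the orbit can never be preperiodic, then conclude by forward invariance of $\xu{c}$. The only cosmetic difference is that you prove a quantitative doubling estimate $\delta_{n}\geq 2^{n}\delta_{0}$ to force unboundedness, whereas the paper simply observes that the moduli form a strictly increasing sequence, which already rules out preperiodicity; your extra precision is harmless but not needed.
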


\begin{proof}
For every $z \in \mathbb{C}$, we have $\left\lvert f_{c}(z) \right\rvert \geq \lvert z \rvert^{2} -\lvert c \rvert$, and $\lvert z \rvert^{2} -\lvert c \rvert > \lvert z \rvert$ if and only if $\lvert z \rvert > \rho_{c}$. It follows by induction that, if $z \in \mathbb{C}$ satisfies $\lvert z \rvert > \rho_{c}$, then $\left\lvert f_{c}^{\circ (k +p)}(z) \right\rvert > \left\lvert f_{c}^{\circ k}(z) \right\rvert$ for all $k \geq 0$ and $p \geq 1$, and hence $z$ is not preperiodic for $f_{c}$. As the set $\xu{c}$ is invariant under $f_{c}$, this completes the proof of the proposition.
\end{proof}

Now, let us study the dynamics of the polynomial $f_{c}$ when $c$ is a real parameter. Suppose that $c \in \left( -\infty, \frac{1}{4} \right]$. Then the map $f_{c} \colon \mathbb{R} \rightarrow \mathbb{R}$ is even and strictly increasing on $\mathbb{R}_{\geq 0}$, has two fixed points $\alpha_{c} \leq \beta_{c}$~-- with equality if and only if $c = \frac{1}{4}$~-- given by \[ \alpha_{c} = \frac{1 -\sqrt{1 -4 c}}{2} \quad \text{and} \quad \beta_{c} = \frac{1 +\sqrt{1 -4 c}}{2} \] and satisfies $f_{c}(z) > z$ for all $z \in \left( \beta_{c}, +\infty \right)$. In particular, we have \[ f_{c}\left( \left[ -\beta_{c}, \beta_{c} \right] \right) = \left[ c, \beta_{c} \right] \] and the sequence $\left( f_{c}^{\circ n}(z) \right)_{n \geq 0}$ diverges to $+\infty$ for all $z \in \left( -\infty, -\beta_{c} \right) \cup \left( \beta_{c}, +\infty \right)$.

It follows that, if $c \in \left[ -2, \frac{1}{4} \right]$, then \[ f_{c}\left( \left[ -\beta_{c}, \beta_{c} \right] \right) \subset \left[ -\beta_{c}, \beta_{c} \right] \, \text{,} \] and hence, for every $z \in \mathbb{R}$, the point $z$ has bounded forward orbit under $f_{c}$ if and only if $z \in \left[ -\beta_{c}, \beta_{c} \right]$.

\begin{remark}
Note that, for every $c \in \mathbb{C}$, we have $\rho_{c} = \beta_{-\lvert c \rvert}$.
\end{remark}

Let us examine more thoroughly the dynamics of the map $f_{c}$ when $c \in (-\infty, -2]$. It is related to the dynamics of the shift map in the space of sign sequences.

Let $\sigma \colon \lbrace -1, 1 \rbrace^{\mathbb{Z}_{\geq 0}} \rightarrow \lbrace -1, 1 \rbrace^{\mathbb{Z}_{\geq 0}}$ denote the \emph{shift map}, which sends any sequence $\boldsymbol{\varepsilon} = \left( \epsilon_{n} \right)_{n \geq 0}$ of $\pm 1$ to the sequence $\left( \epsilon_{n +1} \right)_{n \geq 0}$.

A sign sequence $\boldsymbol{\varepsilon}$ is said to be \emph{periodic} with \emph{period} $p \geq 1$ if $\sigma^{\circ p}(\boldsymbol{\varepsilon}) = \boldsymbol{\varepsilon}$ and $p$ is the least such integer. The sequence $\boldsymbol{\varepsilon}$ is said to be \emph{preperiodic} with \emph{preperiod} $k \geq 0$ if the sequence $\sigma^{\circ k}(\boldsymbol{\varepsilon})$ is periodic and $k$ is minimal with this property.

For $k \geq 0$ and $p \geq 1$, define \[ \sxv{k}{p} = \left\lbrace \boldsymbol{\varepsilon} \in \lbrace -1, 1 \rbrace^{\mathbb{Z}_{\geq 0}} : \sigma^{\circ (k +p)}(\boldsymbol{\varepsilon}) = \sigma^{\circ k}(\boldsymbol{\varepsilon}) \right\rbrace \] to be the set of all preperiodic sign sequences with preperiod less than or equal to $k$ and period dividing $p$, and define \[ \sxu = \bigcup_{k \geq 0, \, p \geq 1} \sxv{k}{p} \] to be the collection of all preperiodic sign sequences. For all $k \geq 0$ and $p \geq 1$, the set $\sxv{k}{p}$ contains exactly $2^{k +p}$ elements~-- each of them being completely determined by the choice of its first $k +p$ terms~-- and is invariant under the shift map. Moreover, the set $\sxu$ is completely invariant under the shift map~-- that is, any sign sequence $\boldsymbol{\varepsilon}$ is preperiodic if and only if the sequence $\sigma(\boldsymbol{\varepsilon})$ is preperiodic.

\begin{theorem}
\label{theorem:dynashift}
For every $c \in (-\infty, -2]$, there exists a unique map \[ \psi_{c} \colon \sxu \rightarrow \mathbb{R} \] that makes the diagram below commute and satisfies $\epsilon_{0} \psi_{c}(\boldsymbol{\varepsilon}) \geq 0$ for all $\boldsymbol{\varepsilon} \in \sxu$.
\begin{center}
\begin{tikzpicture}
\node (M00) at (0,0) {$\sxu$};
\node (M01) at (2,0) {$\sxu$};
\node (M10) at (0,-2) {$\mathbb{R}$};
\node (M11) at (2,-2) {$\mathbb{R}$};
\draw[myarrow] (M00) to node[above]{$\sigma$} (M01);
\draw[myarrow] (M10) to node[below]{$f_{c}$} (M11);
\draw[myarrow] (M00) to node[left]{$\psi_{c}$} (M10);
\draw[myarrow] (M01) to node[right]{$\psi_{c}$} (M11);
\end{tikzpicture}
\end{center}

Furthermore, for every $\boldsymbol{\varepsilon} \in \sxu$, we have \[ \epsilon_{0} \psi_{c}(\boldsymbol{\varepsilon}) \in \left[ \sqrt{-\beta_{c} -c}, \beta_{c} \right] \, \text{,} \] for all $c \in (-\infty, -2]$, and the map $\zeta_{\boldsymbol{\varepsilon}} \colon (-\infty, -2] \rightarrow \mathbb{R}$ defined by \[ \zeta_{\boldsymbol{\varepsilon}}(c) = \psi_{c}(\boldsymbol{\varepsilon}) \] is continuous.
\end{theorem}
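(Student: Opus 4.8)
The plan is to realise $f_c$, for $c\le-2$, through symbolic dynamics based on the two inverse branches of $f_c$. Write $\gamma_c=\sqrt{-\beta_c-c}\ge0$, so that $\gamma_c=0$ precisely when $c=-2$. Since $c\le-\beta_c$ for $c\le-2$, the interval $[-\beta_c,\beta_c]$ is mapped by $f_c$ onto $[c,\beta_c]\supseteq[-\beta_c,\beta_c]$, and $f_c$ restricts to an increasing homeomorphism from $I_1(c):=[\gamma_c,\beta_c]$ onto $[-\beta_c,\beta_c]$ and to a decreasing homeomorphism from $I_{-1}(c):=[-\beta_c,-\gamma_c]$ onto $[-\beta_c,\beta_c]$, with inverses $g_{\epsilon,c}(w)=\epsilon\sqrt{w-c}$ for $\epsilon\in\{-1,1\}$. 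These maps depend continuously on $(w,c)$, fix $\beta_c$ (if $\epsilon=1$) or $\alpha_c$ (if $\epsilon=-1$), and satisfy $\epsilon z\in[\gamma_c,\beta_c]$ whenever $z\in I_\epsilon(c)$.

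Given a sign sequence $\boldsymbol{\varepsilon}$ and $N\ge0$, I would then study the itinerary set $K_N(\boldsymbol{\varepsilon},c)=\{z\in\mathbb{R}:f_c^{\circ n}(z)\in I_{\epsilon_n}(c)\text{ for }0\le n\le N\}$. An immediate induction gives $K_N(\boldsymbol{\varepsilon},c)=g_{\epsilon_0,c}\circ\cdots\circ g_{\epsilon_N,c}\bigl([-\beta_c,\beta_c]\bigr)$, so $K_N(\boldsymbol{\varepsilon},c)$ is a nonempty compact subinterval of $I_{\epsilon_0}(c)$; the sequence decreases, $K_{N+1}(\boldsymbol{\varepsilon},c)\subseteq K_N(\boldsymbol{\varepsilon},c)$; and $f_c$ carries $K_N(\boldsymbol{\varepsilon},c)$ onto $K_{N-1}(\sigma(\boldsymbol{\varepsilon}),c)$. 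Moreover $\bigcup_{\boldsymbol{\varepsilon}}\bigcap_{N\ge0}K_N(\boldsymbol{\varepsilon},c)$ equals the set $\Lambda_c$ of points whose entire forward orbit under $f_c$ stays in $[-\beta_c,\beta_c]$.

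The core of the proof is the estimate: for every $\boldsymbol{\varepsilon}$ and every compact $C\subseteq(-\infty,-2]$ one has $\sup_{c\in C}\operatorname{diam}K_N(\boldsymbol{\varepsilon},c)\to0$ as $N\to\infty$. When $c<-2$, the critical value $c$ satisfies $|c|>\beta_c$, so the forward orbit of $0$ escapes to $+\infty$ and $0\notin I_1(c)\cup I_{-1}(c)$; one checks that $f_c$ splits each cylinder interval into two subcylinders separated by a genuine gap and is, locally uniformly in $c<-2$, eventually expanding on $\Lambda_c$, whence $\operatorname{diam}K_N(\boldsymbol{\varepsilon},c)$ decays geometrically, uniformly on compact subsets of $(-\infty,-2)$. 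The delicate case is $c=-2$, where $\gamma_{-2}=0$ and the expansion degenerates; there I would use the substitution $z=2\cos\theta$, $\theta\in[0,\pi]$, which conjugates $f_{-2}$ to the tent map $T(\theta)=\min\{2\theta,\,2\pi-2\theta\}$ on $[0,\pi]$ and sends $g_{1,-2}$, $g_{-1,-2}$ to the two inverse branches $\theta\mapsto\theta/2$ and $\theta\mapsto\pi-\theta/2$ of $T$; these are exact $\tfrac12$-contractions, so $\operatorname{diam}K_N(\boldsymbol{\varepsilon},-2)\le\pi\,2^{-N}$. Uniformity for $c$ close to $-2$ then follows from $\operatorname{diam}K_N(\boldsymbol{\varepsilon},c)\le\operatorname{diam}K_{N_0}(\boldsymbol{\varepsilon},c)$ for $N\ge N_0$, together with the continuity of $c\mapsto\operatorname{diam}K_{N_0}(\boldsymbol{\varepsilon},c)$ and the smallness of its value at $c=-2$. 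I expect this uniform estimate near $c=-2$ to be the main obstacle.

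Granting it, set $\psi_c(\boldsymbol{\varepsilon})$ equal to the unique point of $\bigcap_{N\ge0}K_N(\boldsymbol{\varepsilon},c)$. From $\psi_c(\boldsymbol{\varepsilon})\in K_0(\boldsymbol{\varepsilon},c)=I_{\epsilon_0}(c)$ we get $\epsilon_0\psi_c(\boldsymbol{\varepsilon})\in[\sqrt{-\beta_c-c},\beta_c]$, which gives both the sign normalisation and the asserted localisation. Since $f_c$ maps $K_N(\boldsymbol{\varepsilon},c)$ onto $K_{N-1}(\sigma(\boldsymbol{\varepsilon}),c)$, the point $f_c(\psi_c(\boldsymbol{\varepsilon}))$ lies in $\bigcap_N K_N(\sigma(\boldsymbol{\varepsilon}),c)=\{\psi_c(\sigma(\boldsymbol{\varepsilon}))\}$, so the diagram commutes. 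For uniqueness, if $\psi$ is any map satisfying the two conditions and $\boldsymbol{\varepsilon}\in\sxu$, then $z=\psi(\boldsymbol{\varepsilon})$ is preperiodic for $f_c$, so $|f_c^{\circ n}(z)|\le\rho_c=\beta_c$ for all $n$ by Proposition~\ref{proposition:dynaloc}; a point with bounded orbit cannot enter $(-\gamma_c,\gamma_c)$ because $|w|<\gamma_c$ forces $f_c(w)<-\beta_c$; combining this with $\epsilon_n f_c^{\circ n}(z)=\epsilon_n\psi(\sigma^{\circ n}(\boldsymbol{\varepsilon}))\ge0$ gives $f_c^{\circ n}(z)\in I_{\epsilon_n}(c)$ for all $n$, hence $z\in\bigcap_N K_N(\boldsymbol{\varepsilon},c)$ and $\psi(\boldsymbol{\varepsilon})=\psi_c(\boldsymbol{\varepsilon})$. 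Finally, the functions $c\mapsto g_{\epsilon_0,c}\circ\cdots\circ g_{\epsilon_N,c}(\beta_c)$ are continuous on $(-\infty,-2]$ and belong to $K_N(\boldsymbol{\varepsilon},c)$, so by the estimate above they converge to $\zeta_{\boldsymbol{\varepsilon}}$ uniformly on compact sets, and $\zeta_{\boldsymbol{\varepsilon}}$ is continuous.
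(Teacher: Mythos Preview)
Your proposal takes a genuinely different route from the paper. You aim to realise $\psi_c(\boldsymbol{\varepsilon})$ as the unique point of the nested intersection $\bigcap_{N\ge0}K_N(\boldsymbol{\varepsilon},c)$ of itinerary cylinders, which requires the shrinking estimate $\operatorname{diam}K_N(\boldsymbol{\varepsilon},c)\to0$. Your treatment of $c=-2$ via the tent-map conjugacy is correct, and your continuity argument near $-2$ (monotonicity in $N$ together with continuity of $c\mapsto\operatorname{diam}K_{N_0}(\boldsymbol{\varepsilon},c)$) is valid; the uniqueness argument, using Proposition~\ref{proposition:dynaloc} to force the orbit into $I_{\epsilon_n}(c)$, is also sound.

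The real gap is the assertion that $f_c$ is ``locally uniformly in $c<-2$, eventually expanding on $\Lambda_c$''. This is true but is not a one-line check: for $c$ close to $-2$ one has $2\gamma_c<1$, and even $\lvert(f_c^{\circ 2})'\rvert$ can fall below $1$ on $\Lambda_c$ (e.g.\ at $c=-2.01$, taking $z$ with $f_c(z)=\pm\gamma_c$ gives $4\lvert z\rvert\lvert f_c(z)\rvert\approx0.46$). One must therefore control products of derivatives along orbits over an unbounded number of steps as $c\to-2^-$, which amounts to a hyperbolicity statement you have not supplied. The paper sidesteps this entirely by exploiting that $\boldsymbol{\varepsilon}\in\sxu$ is \emph{preperiodic}: by the intermediate value theorem each $g_c^{\boldsymbol{\varepsilon}}$ with $\boldsymbol{\varepsilon}\in\{-1,1\}^p$ has a fixed point in $\xv{c}{0}{p}$, distinct $\boldsymbol{\varepsilon}$'s yield distinct fixed points by the sign condition (Claim~\ref{claim:dynashift}), and since $\xv{c}{0}{p}$ has at most $2^p$ elements this fixed point is unique---no contraction estimate is needed (Lemma~\ref{lemma:dynashift}). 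Continuity of $c\mapsto\mathfrak{z}_{\boldsymbol{\varepsilon}}(c)$ then follows from the elementary root-proximity bound $\min_{\boldsymbol{\varepsilon}'}\lvert\mathfrak{z}_{\boldsymbol{\varepsilon}}(c)-\mathfrak{z}_{\boldsymbol{\varepsilon}'}(c')\rvert\le\lvert F_p(c',\mathfrak{z}_{\boldsymbol{\varepsilon}}(c))-\mathfrak{z}_{\boldsymbol{\varepsilon}}(c)\rvert^{1/2^p}$. Your approach would buy you the stronger conclusion that the \emph{full} shift embeds into the dynamics of $f_c$ (not just its preperiodic part), but for the theorem as stated the paper's counting argument is shorter and avoids the analytic obstacle you flagged.
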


Before proving Theorem~\ref{theorem:dynashift}, observe that $c \leq -\beta_{c}$ for all $c \in (-\infty, -2]$, with equality if and only if $c = -2$. Consequently, for $c \in (-\infty, -2]$ and $\epsilon = \pm 1$, the partial inverse $g_{c}^{\epsilon} \colon [c, +\infty) \rightarrow \mathbb{R}$ of $f_{c}$ given by \[ g_{c}^{\epsilon}(z) = \epsilon \sqrt{z -c} \] is well defined on $\left[ -\beta_{c}, \beta_{c} \right]$, and we have \[ g_{c}^{\epsilon}\left( \left[ -\beta_{c}, \beta_{c} \right] \right) = \left[ \epsilon \sqrt{-\beta_{c} -c}, \epsilon \beta_{c} \right] \subset \left[ -\beta_{c}, \beta_{c} \right] \, \text{.} \]

\begin{lemma}
\label{lemma:dynashift}
For all $c \in (-\infty, -2]$ and all $\boldsymbol{\varepsilon} = \left( \epsilon_{0}, \dotsc, \epsilon_{p -1} \right) \in \lbrace -1, 1 \rbrace^{p}$, with $p \geq 1$, the map $g_{c}^{\boldsymbol{\varepsilon}} \colon \left[ -\beta_{c}, \beta_{c} \right] \rightarrow \left[ -\beta_{c}, \beta_{c} \right]$ defined by \[ g_{c}^{\boldsymbol{\varepsilon}}(z) = g_{c}^{\epsilon_{0}} \circ \dotsb \circ g_{c}^{\epsilon_{p -1}}(z) \] has a unique fixed point $\mathfrak{z}_{\boldsymbol{\varepsilon}}(c)$.

Moreover, for every finite sequence $\boldsymbol{\varepsilon}$ of $\pm 1$, the map $c \mapsto \mathfrak{z}_{\boldsymbol{\varepsilon}}(c)$ is continuous.
\end{lemma}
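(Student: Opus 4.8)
\emph{The plan.} Write $I_c = \left[ -\beta_c, \beta_c \right]$ and, for $\epsilon = \pm 1$, $I_c^{\epsilon} = \left[ \epsilon \sqrt{-\beta_c -c}, \epsilon \beta_c \right]$, so that — by the computation recorded just before the lemma — each $g_c^{\epsilon}$ maps $I_c$ homeomorphically onto $I_c^{\epsilon} \subset I_c$; in particular $g_c^{\boldsymbol{\varepsilon}} \colon I_c \rightarrow I_c$ is a continuous, strictly monotone self-map. The idea is to obtain existence of a fixed point from the intermediate value theorem, uniqueness from a crude root count for the polynomial $F_{p}(c, \cdot) - \mathrm{id}$, and continuity of $c \mapsto \mathfrak{z}_{\boldsymbol{\varepsilon}}(c)$ formally from uniqueness. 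For existence: the continuous map $z \mapsto g_c^{\boldsymbol{\varepsilon}}(z) -z$ takes a value $\geq 0$ at $z = -\beta_c$ (since $g_c^{\boldsymbol{\varepsilon}}(-\beta_c) \in I_c$) and $\leq 0$ at $z = \beta_c$, hence vanishes somewhere on $I_c$; so each of the $2^{p}$ maps $g_c^{\boldsymbol{\varepsilon}}$, $\boldsymbol{\varepsilon} \in \lbrace -1, 1 \rbrace^{p}$, has at least one fixed point in $I_c$.

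\emph{Uniqueness.} Suppose $\mathfrak{z} \in I_c$ is fixed by $g_c^{\boldsymbol{\varepsilon}} = g_c^{\epsilon_0} \circ \dots \circ g_c^{\epsilon_{p -1}}$. Applying $f_c$ repeatedly and using that $f_c \circ g_c^{\epsilon}$ is the identity on $\left[ c, +\infty \right)$, one gets $f_c^{\circ j}(\mathfrak{z}) = g_c^{\epsilon_j} \circ \dots \circ g_c^{\epsilon_{p -1}}(\mathfrak{z}) \in I_c^{\epsilon_j}$ for $0 \leq j \leq p -1$ and $f_c^{\circ p}(\mathfrak{z}) = \mathfrak{z}$; in particular $\mathfrak{z}$ is a root of $F_{p}(c, \cdot) - \mathrm{id}$. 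Moreover the forward orbit of $\mathfrak{z}$ never meets $0$: otherwise $f_c^{\circ j}(\mathfrak{z}) = 0$ for some $j$, so $f_c^{\circ (j +1)}(\mathfrak{z}) = c$; when $c < -2$ the orbit then diverges to $+\infty$ (as $c < -\beta_c$), and when $c = -2$ it reaches $\beta_c = 2$ after one more step and stays there — either way it never returns to $0$, contradicting periodicity. Since $I_c^{1} \cap I_c^{-1}$ is empty when $c < -2$ and equals $\lbrace 0 \rbrace$ when $c = -2$, it follows that $\epsilon_j = \sgn\left( f_c^{\circ j}(\mathfrak{z}) \right)$ for every $j$; hence two distinct sequences in $\lbrace -1, 1 \rbrace^{p}$ cannot have a common fixed point in $I_c$. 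Now $F_{p}(c, \cdot) - \mathrm{id}$ is, for fixed $c$, a polynomial in $z$ of degree $2^{p}$, so it has at most $2^{p}$ roots; and the preceding remarks produce at least $2^{p}$ of them — the fixed points in $I_c$ of the $2^{p}$ maps $g_c^{\boldsymbol{\varepsilon}}$ form $2^{p}$ pairwise-disjoint nonempty sets. A disjoint union of $2^{p}$ nonempty sets contained in a set of at most $2^{p}$ elements consists of singletons, so each $g_c^{\boldsymbol{\varepsilon}}$ has exactly one fixed point $\mathfrak{z}_{\boldsymbol{\varepsilon}}(c)$ in $I_c$.

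\emph{Continuity.} The function $(c, z) \mapsto g_c^{\boldsymbol{\varepsilon}}(z) -z$ is continuous on $\lbrace (c, z) : c \leq -2, \ -\beta_c \leq z \leq \beta_c \rbrace$. Let $c_{n} \rightarrow c$ in $(-\infty, -2]$; then $\beta_{c_{n}} \rightarrow \beta_c$, so the values $\mathfrak{z}_{\boldsymbol{\varepsilon}}(c_{n})$ lie in a fixed bounded set. Any subsequence admits a further subsequence converging to some $z^{*}$, which lies in $I_c$ (being a limit of points of the $I_{c_{n}}$) and satisfies $g_c^{\boldsymbol{\varepsilon}}(z^{*}) = z^{*}$ upon passing to the limit in the fixed-point equations; by uniqueness, $z^{*} = \mathfrak{z}_{\boldsymbol{\varepsilon}}(c)$. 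Since every subsequence has a sub-subsequence with this same limit, $\mathfrak{z}_{\boldsymbol{\varepsilon}}(c_{n}) \rightarrow \mathfrak{z}_{\boldsymbol{\varepsilon}}(c)$, which gives continuity.

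\emph{Main obstacle.} The one genuine subtlety is the endpoint $c = -2$, where $I_c^{1}$ and $I_c^{-1}$ overlap at $0$, so the sign sequence no longer separates the fixed-point sets for free; the observation that a periodic orbit of $f_c$ cannot pass through $0$ is exactly what repairs this. (A more computational route to uniqueness — showing $g_c^{\boldsymbol{\varepsilon}}$ is a contraction via $\lvert (g_c^{\epsilon})'(z) \rvert = \tfrac{1}{2 \sqrt{z -c}}$ — works comfortably when $c$ is sufficiently negative but degenerates as $c \rightarrow -2^{+}$, which is why the counting argument is preferable.)
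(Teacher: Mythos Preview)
Your proof is correct and, for existence and uniqueness, follows the same line as the paper: existence by the intermediate value theorem, then the observation that a fixed point $\mathfrak{z}$ of $g_c^{\boldsymbol{\varepsilon}}$ satisfies $f_c^{\circ j}(\mathfrak{z}) \in I_c^{\epsilon_j}$ with $f_c^{\circ j}(\mathfrak{z}) \neq 0$ (you argue this by tracing the orbit of $0$; the paper phrases it as ``$\epsilon_j\sqrt{-\beta_c-c}$ has preperiod $2$''), so distinct sign sequences yield distinct fixed points, and the degree-$2^p$ count for $F_p(c,\cdot)-\mathrm{id}$ forces each fixed-point set to be a singleton.

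The continuity arguments genuinely differ. The paper proves continuity by an explicit estimate: for $c'$ near $c$ it picks the closest root $\mathfrak{z}_{\boldsymbol{\varepsilon}_{c'}}(c')$ of $F_p(c',\cdot)-\mathrm{id}$ to $\mathfrak{z}_{\boldsymbol{\varepsilon}}(c)$ and bounds the distance by $\lvert F_p(c',\mathfrak{z}_{\boldsymbol{\varepsilon}}(c))-\mathfrak{z}_{\boldsymbol{\varepsilon}}(c)\rvert^{1/2^p}$ using the factorisation $F_p(c',z)-z=\prod_{\boldsymbol{\varepsilon}'}(z-\mathfrak{z}_{\boldsymbol{\varepsilon}'}(c'))$, then invokes the sign information again to identify $\boldsymbol{\varepsilon}_{c'}=\boldsymbol{\varepsilon}$. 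Your route---compactness plus uniqueness of the fixed point, passing to the limit in the fixed-point equation via joint continuity of $(c,z)\mapsto g_c^{\boldsymbol{\varepsilon}}(z)$---is cleaner and avoids the polynomial identity, at the cost of yielding no quantitative modulus of continuity. Either approach suffices here; the paper's has the minor advantage of reusing the factorisation machinery that reappears later (e.g.\ in Proposition~\ref{proposition:dynashift}).
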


\begin{claim}
\label{claim:dynashift}
If $c \in (-\infty, -2]$, $\boldsymbol{\varepsilon} \in \lbrace -1, 1 \rbrace^{p}$, with $p \geq 1$, and $\mathfrak{z}$ is a fixed point of $g_{c}^{\boldsymbol{\varepsilon}}$, then $\mathfrak{z} \in \xv{c}{0}{p}$ and $\epsilon_{j} f_{c}^{\circ j}(\mathfrak{z}) > 0$ for all $j \in \lbrace 0, \dotsc, p -1 \rbrace$.
\end{claim}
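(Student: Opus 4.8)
The plan is to unwind the composition defining $g_{c}^{\boldsymbol{\varepsilon}}$ by applying $f_{c}$ repeatedly, exploiting that each branch $g_{c}^{\epsilon}$ is a right inverse of $f_{c}$ on $[c, +\infty)$, namely $f_{c}(g_{c}^{\epsilon}(z)) = z$ for all $z \geq c$. Since $-\beta_{c} \geq c$, the whole interval $[-\beta_{c}, \beta_{c}]$ lies in that range, and the inclusion $g_{c}^{\epsilon}([-\beta_{c}, \beta_{c}]) \subset [-\beta_{c}, \beta_{c}]$ recorded just before Lemma~\ref{lemma:dynashift} ensures that every intermediate point stays in $[-\beta_{c}, \beta_{c}]$. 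Starting from $\mathfrak{z} = g_{c}^{\epsilon_{0}} \circ \dotsb \circ g_{c}^{\epsilon_{p -1}}(\mathfrak{z})$ and peeling off the outermost branch one at a time, I would prove by induction on $j$ that
\[ f_{c}^{\circ j}(\mathfrak{z}) = g_{c}^{\epsilon_{j}} \circ \dotsb \circ g_{c}^{\epsilon_{p -1}}(\mathfrak{z}) \qquad \text{for all } j \in \lbrace 0, \dotsc, p \rbrace \, \text{,} \]
the empty composition at $j = p$ being the identity. The case $j = p$ then reads $f_{c}^{\circ p}(\mathfrak{z}) = \mathfrak{z}$, that is, $F_{p}(c, \mathfrak{z}) = F_{0}(c, \mathfrak{z})$, so $\mathfrak{z} \in \xv{c}{0}{p}$.

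Next I would deduce the sign statement. Fix $j \in \lbrace 0, \dotsc, p -1 \rbrace$ and set $u_{j} = f_{c}^{\circ (j +1)}(\mathfrak{z})$; by the displayed identity, $u_{j} \in [-\beta_{c}, \beta_{c}]$ and $f_{c}^{\circ j}(\mathfrak{z}) = g_{c}^{\epsilon_{j}}(u_{j}) = \epsilon_{j} \sqrt{u_{j} -c}$. Since $u_{j} \geq -\beta_{c} \geq c$, the radicand is nonnegative, so
\[ \epsilon_{j} f_{c}^{\circ j}(\mathfrak{z}) = \epsilon_{j}^{2} \sqrt{u_{j} -c} = \sqrt{u_{j} -c} \geq 0 \, \text{,} \]
and it only remains to check that $u_{j} \neq c$ for every such $j$.

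Finally, the strictness. Because $c \leq -\beta_{c}$ with equality precisely when $c = -2$, and because $u_{j} \in [-\beta_{c}, \beta_{c}]$, an equality $u_{j} = c$ can occur only for $c = -2$, in which case $f_{-2}^{\circ (j +1)}(\mathfrak{z}) = -2$. I would exclude this using the elementary dynamics of $f_{-2}$ at its fixed point $2$: one has $f_{-2}(-2) = 2 = f_{-2}(2)$, so $f_{-2}^{\circ (j +1)}(\mathfrak{z}) = -2$ would force $f_{-2}^{\circ (j +2)}(\mathfrak{z}) = 2$, putting the fixed point $2$ into the forward orbit of $\mathfrak{z}$; but $f_{-2}^{\circ p}(\mathfrak{z}) = \mathfrak{z}$ makes that orbit a finite cycle, and a cycle containing a fixed point is reduced to that fixed point, whence $\mathfrak{z} = 2$, contradicting $f_{-2}^{\circ (j +1)}(\mathfrak{z}) = -2$. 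Therefore $u_{j} \neq c$ and $\epsilon_{j} f_{c}^{\circ j}(\mathfrak{z}) > 0$ for all $j \in \lbrace 0, \dotsc, p -1 \rbrace$, which finishes the claim. The only real subtlety is this boundary case $c = -2$, where the inverse branches reach down to $c$ itself, so the strict inequality is not visible from the localization of Proposition~\ref{proposition:dynaloc} alone and one must invoke the explicit orbit $-2 \mapsto 2 \mapsto 2$; for $c < -2$ one instead has $c < -\beta_{c} \leq u_{j}$ directly.
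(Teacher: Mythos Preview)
Your proof is correct and follows essentially the same line as the paper's: both establish $f_{c}^{\circ j}(\mathfrak{z}) = g_{c}^{\epsilon_{j}} \circ \dotsb \circ g_{c}^{\epsilon_{p-1}}(\mathfrak{z})$ by peeling off the composition, deduce $f_{c}^{\circ p}(\mathfrak{z}) = \mathfrak{z}$, and then read the sign from the image interval of $g_{c}^{\epsilon_{j}}$. The only difference is the packaging of the strictness step: where you split into $c < -2$ (immediate from $u_{j} \geq -\beta_{c} > c$) and $c = -2$ (the orbit $-2 \mapsto 2 \mapsto 2$ forces $\mathfrak{z} = 2$, a contradiction), the paper handles all $c \leq -2$ at once by observing that the boundary point $\epsilon_{j}\sqrt{-\beta_{c}-c}$ is preperiodic with preperiod exactly $2$ and period $1$ (its orbit being $\pm\sqrt{-\beta_{c}-c} \mapsto -\beta_{c} \mapsto \beta_{c} \mapsto \beta_{c}$), hence cannot belong to the periodic set $\xv{c}{0}{p}$, yielding $\epsilon_{j} f_{c}^{\circ j}(\mathfrak{z}) \in \bigl(\sqrt{-\beta_{c}-c},\, \beta_{c}\bigr] \subset \mathbb{R}_{>0}$ directly; your $c=-2$ argument is precisely the specialization of this observation.
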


\begin{proof}[Proof of Claim~\ref{claim:dynashift}]
We have $f_{c}^{\circ p}(\mathfrak{z}) = \mathfrak{z}$ and the set $\xv{c}{0}{p}$ is invariant under $f_{c}$. Therefore, for all $j \in \lbrace 0, \dotsc, p -1 \rbrace$, we have \[ f_{c}^{\circ j}(\mathfrak{z}) = g_{c}^{\epsilon_{j}} \circ \dotsb \circ g_{c}^{\epsilon_{p -1}}(\mathfrak{z}) \in g_{c}^{\epsilon_{j}}\left( \left[ -\beta_{c}, \beta_{c} \right] \right) \cap \xv{c}{0}{p} \, \text{,} \] which yields \[ \epsilon_{j} f_{c}^{\circ j}(\mathfrak{z}) \in \left( \sqrt{-\beta_{c} -c}, \beta_{c} \right] \subset \mathbb{R}_{> 0} \] since $\epsilon_{j} \sqrt{-\beta_{c} -c}$ is preperiodic for $f_{c}$ with preperiod $2$ and period $1$.
\end{proof}

\begin{proof}[Proof of Lemma~\ref{lemma:dynashift}]
Fix $c \in (-\infty, -2]$ and $p \geq 1$. For every $\boldsymbol{\varepsilon} \in \lbrace -1, 1 \rbrace^{p}$, the map $g_{c}^{\boldsymbol{\varepsilon}}$ has a fixed point $\mathfrak{z}_{\boldsymbol{\varepsilon}}(c)$ by the intermediate value theorem. Now, note that $\mathfrak{z}_{\boldsymbol{\varepsilon}}(c)$ is not a fixed point of $g_{c}^{\boldsymbol{\varepsilon}^{\prime}}$ whenever $\boldsymbol{\varepsilon} \neq \boldsymbol{\varepsilon}^{\prime} \in \lbrace -1, 1 \rbrace^{p}$ by Claim~\ref{claim:dynashift}. Therefore, the points $\mathfrak{z}_{\boldsymbol{\varepsilon}}(c)$, with $\boldsymbol{\varepsilon} \in \lbrace -1, 1 \rbrace^{p}$, are pairwise distinct, and, since $\xv{c}{0}{p}$ contains at most $2^{p}$ elements, it follows that \[ \xv{c}{0}{p} = \left\lbrace \mathfrak{z}_{\boldsymbol{\varepsilon}}(c) : \boldsymbol{\varepsilon} \in \lbrace -1, 1 \rbrace^{p} \right\rbrace \, \text{.} \] Thus, for every $\boldsymbol{\varepsilon} \in \lbrace -1, 1 \rbrace^{p}$, $\mathfrak{z}_{\boldsymbol{\varepsilon}}(c)$ is the unique fixed point of the map $g_{c}^{\boldsymbol{\varepsilon}}$.

Now, fix $p \geq 1$, $\boldsymbol{\varepsilon} = \left( \epsilon_{0}, \dotsc, \epsilon_{p -1} \right) \in \lbrace -1, 1 \rbrace^{p}$ and $c \in (-\infty, -2]$. It remains to verify that the map $c^{\prime} \mapsto \mathfrak{z}_{\boldsymbol{\varepsilon}}\left( c^{\prime} \right)$ is continuous at $c$. For each $c^{\prime} \in (-\infty, -2]$, choose $\boldsymbol{\varepsilon}_{c^{\prime}} \in \lbrace -1, 1 \rbrace^{p}$ such that $\left\lvert \mathfrak{z}_{\boldsymbol{\varepsilon}}(c) -\mathfrak{z}_{\boldsymbol{\varepsilon}_{c^{\prime}}}\left( c^{\prime} \right) \right\rvert$ is minimal. Then we have \[ \left\lvert \mathfrak{z}_{\boldsymbol{\varepsilon}}(c) -\mathfrak{z}_{\boldsymbol{\varepsilon}_{c^{\prime}}}\left( c^{\prime} \right) \right\rvert \leq \left( \prod_{\boldsymbol{\varepsilon}^{\prime} \in \lbrace -1, 1 \rbrace^{p}} \left\lvert \mathfrak{z}_{\boldsymbol{\varepsilon}}(c) -\mathfrak{z}_{\boldsymbol{\varepsilon}^{\prime}}\left( c^{\prime} \right) \right\rvert \right)^{\frac{1}{2^{p}}} = \left\lvert F_{p}\left( c^{\prime}, \mathfrak{z}_{\boldsymbol{\varepsilon}}(c) \right) -\mathfrak{z}_{\boldsymbol{\varepsilon}}(c) \right\rvert^{\frac{1}{2^{p}}} \] for all $c^{\prime} \in (-\infty, -2]$, and so $\mathfrak{z}_{\boldsymbol{\varepsilon}_{c^{\prime}}}\left( c^{\prime} \right)$ tends to $\mathfrak{z}_{\boldsymbol{\varepsilon}}(c)$ as $c^{\prime}$ approaches $c$. By Claim~\ref{claim:dynashift}, it follows that, whenever $c^{\prime}$ is close enough to $c$, we have $\epsilon_{j} f_{c^{\prime}}^{\circ j}\left( \mathfrak{z}_{\boldsymbol{\varepsilon}_{c^{\prime}}}\left( c^{\prime} \right) \right) > 0$ for all $j \in \lbrace 0, \dotsc, p -1 \rbrace$, which yields $\boldsymbol{\varepsilon}_{c^{\prime}} = \boldsymbol{\varepsilon}$. Thus, the limit of $\mathfrak{z}_{\boldsymbol{\varepsilon}}\left( c^{\prime} \right)$ as $c^{\prime}$ approaches $c$ is $\mathfrak{z}_{\boldsymbol{\varepsilon}}(c)$, and the lemma is proved.
\end{proof}

We may now deduce Theorem~\ref{theorem:dynashift} from Lemma~\ref{lemma:dynashift}.

\begin{proof}[Proof of Theorem~\ref{theorem:dynashift}]
Fix $c \in (-\infty, -2]$. Assume that $\psi_{c} \colon \sxu \rightarrow \mathbb{R}$ is a map that satisfies $f_{c} \circ \psi_{c} = \psi_{c} \circ \sigma$ and $\epsilon_{0} \psi_{c}(\boldsymbol{\varepsilon}) \geq 0$ for all $\boldsymbol{\varepsilon} \in \sxu$. Then, for all $\boldsymbol{\varepsilon} \in \sxu$ and all $n \geq 0$, we have \[ \psi_{c}(\boldsymbol{\varepsilon}) = g_{c}^{\epsilon_{0}} \circ \dotsb \circ g_{c}^{\epsilon_{n}}\left( \psi_{c}\left( \sigma^{\circ (n +1)}(\boldsymbol{\varepsilon}) \right) \right) \, \text{.} \] It follows that, if $\boldsymbol{\varepsilon}$ is a periodic sign sequence with period $p \geq 1$, then $\psi_{c}(\boldsymbol{\varepsilon})$ is a fixed point of the map $g_{c}^{\boldsymbol{\varepsilon}_{\boldsymbol{p}}}$, where $\boldsymbol{\varepsilon}_{\boldsymbol{p}} = \left( \epsilon_{0}, \dotsc, \epsilon_{p -1} \right) \in \lbrace -1, 1 \rbrace^{p}$, and hence $\psi_{c}(\boldsymbol{\varepsilon}) = \mathfrak{z}_{\boldsymbol{\varepsilon}_{\boldsymbol{p}}}(c)$. Therefore, for every $\boldsymbol{\varepsilon} \in \sxu$ with preperiod $k \geq 0$ and period $p \geq 1$, we have $\psi_{c}(\boldsymbol{\varepsilon}) = g_{c}^{\boldsymbol{\varepsilon}_{\boldsymbol{pp}}}\left( \mathfrak{z}_{\boldsymbol{\varepsilon}_{\boldsymbol{p}}}(c) \right)$, where $\boldsymbol{\varepsilon}_{\boldsymbol{pp}} = \left( \epsilon_{0}, \dotsc, \epsilon_{k -1} \right) \in \lbrace -1, 1 \rbrace^{k}$ and $\boldsymbol{\varepsilon}_{\boldsymbol{p}} = \left( \epsilon_{k}, \dotsc, \epsilon_{k +p -1} \right) \in \lbrace -1, 1 \rbrace^{p}$, adopting the convention that $g_{c}^{\varnothing}$ denotes the identity map of $\left[ -\beta_{c}, \beta_{c} \right]$. In particular, there is at most one map $\psi_{c} \colon \sxu \rightarrow \mathbb{R}$ that satisfies the conditions above.

For $\boldsymbol{\varepsilon} = \left( \epsilon_{n} \right)_{n \geq 0}$ a preperiodic sign sequence with preperiod $k \geq 0$ and period $p \geq 1$, define $\boldsymbol{\varepsilon}_{\boldsymbol{pp}} = \left( \epsilon_{0}, \dotsc, \epsilon_{k -1} \right) \in \lbrace -1, 1 \rbrace^{k}$, $\boldsymbol{\varepsilon}_{\boldsymbol{p}} = \left( \epsilon_{k}, \dotsc, \epsilon_{k +p -1} \right) \in \lbrace -1, 1 \rbrace^{p}$ and $\psi_{c}(\boldsymbol{\varepsilon}) = g_{c}^{\boldsymbol{\varepsilon}_{\boldsymbol{pp}}}\left( \mathfrak{z}_{\boldsymbol{\varepsilon}_{\boldsymbol{p}}}(c) \right)$. If $\boldsymbol{\varepsilon}$ is a periodic sign sequence with period $p \geq 1$, then $f_{c} \circ \psi_{c}(\boldsymbol{\varepsilon})$ is a fixed point of the map $g_{c}^{\sigma(\boldsymbol{\varepsilon})_{\boldsymbol{p}}}$ since $\sigma(\boldsymbol{\varepsilon})_{\boldsymbol{p}} = \left( \epsilon_{1}, \dotsc, \epsilon_{p -1}, \epsilon_{0} \right)$, and hence $f_{c} \circ \psi_{c}(\boldsymbol{\varepsilon}) = \psi_{c} \circ \sigma(\boldsymbol{\varepsilon})$. Similarly, if $\boldsymbol{\varepsilon} \in \sxu$ has preperiod $k \geq 1$ and period $p \geq 1$, then $f_{c} \circ \psi_{c}(\boldsymbol{\varepsilon}) = \psi_{c} \circ \sigma(\boldsymbol{\varepsilon})$ since $\sigma(\boldsymbol{\varepsilon})_{\boldsymbol{pp}} = \left( \epsilon_{1}, \dotsc, \epsilon_{k -1} \right)$ and $\sigma(\boldsymbol{\varepsilon})_{\boldsymbol{p}} = \boldsymbol{\varepsilon}_{\boldsymbol{p}}$. Moreover, for all $\boldsymbol{\varepsilon} \in \sxu$, we have $\psi_{c}(\boldsymbol{\varepsilon}) \in g_{c}^{\epsilon_{0}}\left( \left[ -\beta_{c}, \beta_{c} \right] \right)$, which yields \[ \epsilon_{0} \psi_{c}(\boldsymbol{\varepsilon}) \in \left[ \sqrt{-\beta_{c} -c}, \beta_{c} \right] \subset \mathbb{R}_{\geq 0} \, \text{.} \] Thus, the map $\psi_{c} \colon \sxu \rightarrow \mathbb{R}$ so defined has the required properties.

Furthermore, for every $\boldsymbol{\varepsilon} \in \sxu$, the map $\zeta_{\boldsymbol{\varepsilon}} \colon c \mapsto \psi_{c}(\boldsymbol{\varepsilon})$ is clearly continuous.
\end{proof}

\begin{remark}
Observe that, if $c \in (-\infty, -2]$ and $\boldsymbol{\varepsilon}, \boldsymbol{\varepsilon}^{\prime} \in \sxu$ satisfy $\epsilon_{0} = -\epsilon_{0}^{\prime}$ and $\sigma(\boldsymbol{\varepsilon}) = \sigma\left( \boldsymbol{\varepsilon}^{\prime} \right)$, then $\psi_{c}(\boldsymbol{\varepsilon}) = -\psi_{c}\left( \boldsymbol{\varepsilon}^{\prime} \right)$.
\end{remark}

Note that the proof of Theorem~\ref{theorem:dynashift} provides explicit formulas for the maps $\zeta_{\boldsymbol{\varepsilon}}$ with $\boldsymbol{\varepsilon} \in \sxv{k}{1}$ and $k \geq 0$.

\begin{example}
Suppose that $\epsilon = \pm 1$. Then 
\begin{itemize}
\item for $\boldsymbol{\varepsilon} \in \sxv{1}{1}$ given by $\epsilon_{0} = \epsilon$ and $\epsilon_{1} = -1$, we have \[ \zeta_{\boldsymbol{\varepsilon}} \colon c \mapsto \psi_{c}(\boldsymbol{\varepsilon}) = -\epsilon \alpha_{c} \, \text{;} \]
\item for $\boldsymbol{\varepsilon} \in \sxv{1}{1}$ given by $\epsilon_{0} = \epsilon$ and $\epsilon_{1} = 1$, we have \[ \zeta_{\boldsymbol{\varepsilon}} \colon c \mapsto \psi_{c}(\boldsymbol{\varepsilon}) = \epsilon \beta_{c} \, \text{;} \]
\item for $\boldsymbol{\varepsilon} \in \sxv{2}{1}$ given by $\epsilon_{0} = \epsilon$, $\epsilon_{1} = 1$ and $\epsilon_{2} = -1$, we have \[ \zeta_{\boldsymbol{\varepsilon}} \colon c \mapsto \psi_{c}(\boldsymbol{\varepsilon}) = \epsilon \sqrt{-\alpha_{c} -c} \, \text{;} \]
\item for $\boldsymbol{\varepsilon} \in \sxv{2}{1}$ given by $\epsilon_{0} = \epsilon$, $\epsilon_{1} = -1$ and $\epsilon_{2} = 1$, we have \[ \zeta_{\boldsymbol{\varepsilon}} \colon c \mapsto \psi_{c}(\boldsymbol{\varepsilon}) = \epsilon \sqrt{-\beta_{c} -c} \, \text{.} \]
\end{itemize}
\end{example}

\begin{proposition}
\label{proposition:dynashift}
Assume that $c \in (-\infty, -2]$. Then we have \[ \xv{c}{k}{p} = \psi_{c}\left( \sxv{k}{p} \right) \subset \left[ -\beta_{c}, \beta_{c} \right] \] for all $k \geq 0$ and $p \geq 1$ (see Figure~\ref{figure:dynashift}).

Furthermore, if $c \in (-\infty, -2)$, then the map $\psi_{c} \colon \sxu \rightarrow \mathbb{R}$ is injective.
\end{proposition}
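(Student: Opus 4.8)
The plan is to prove the inclusion $\psi_{c}\!\left( \sxv{k}{p} \right) \subseteq \xv{c}{k}{p} \cap \left[ -\beta_{c}, \beta_{c} \right]$ for every $c \in (-\infty, -2]$, then to establish the injectivity of $\psi_{c}$ when $c \in (-\infty, -2)$, use it together with a counting argument to obtain the equality $\xv{c}{k}{p} = \psi_{c}\!\left( \sxv{k}{p} \right)$ in that range, and finally recover the remaining case $c = -2$ by a limiting argument. For the first inclusion I would iterate the commuting relation of Theorem~\ref{theorem:dynashift} to get $f_{c}^{\circ n} \circ \psi_{c} = \psi_{c} \circ \sigma^{\circ n}$ for all $n \geq 0$; then, given $\boldsymbol{\varepsilon} \in \sxv{k}{p}$, the relation $\sigma^{\circ (k +p)}(\boldsymbol{\varepsilon}) = \sigma^{\circ k}(\boldsymbol{\varepsilon})$ yields $F_{k +p}\!\left( c, \psi_{c}(\boldsymbol{\varepsilon}) \right) = \psi_{c}\!\left( \sigma^{\circ (k +p)}(\boldsymbol{\varepsilon}) \right) = \psi_{c}\!\left( \sigma^{\circ k}(\boldsymbol{\varepsilon}) \right) = F_{k}\!\left( c, \psi_{c}(\boldsymbol{\varepsilon}) \right)$, so $\psi_{c}(\boldsymbol{\varepsilon}) \in \xv{c}{k}{p}$, while the estimate $\epsilon_{0} \psi_{c}(\boldsymbol{\varepsilon}) \in \left[ \sqrt{-\beta_{c} -c}, \beta_{c} \right]$ from Theorem~\ref{theorem:dynashift} gives $\lvert \psi_{c}(\boldsymbol{\varepsilon}) \rvert \leq \beta_{c}$.

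For the injectivity of $\psi_{c}$ when $c \in (-\infty, -2)$, I would use that $c < -\beta_{c}$ in this range, so that $\sqrt{-\beta_{c} -c} > 0$. Given $\boldsymbol{\varepsilon} \in \sxu$, applying the estimate of Theorem~\ref{theorem:dynashift} to the sequence $\sigma^{\circ n}(\boldsymbol{\varepsilon})$, whose first term is $\epsilon_{n}$, shows that $\epsilon_{n} f_{c}^{\circ n}\!\left( \psi_{c}(\boldsymbol{\varepsilon}) \right) = \epsilon_{n} \psi_{c}\!\left( \sigma^{\circ n}(\boldsymbol{\varepsilon}) \right) \geq \sqrt{-\beta_{c} -c} > 0$; hence $f_{c}^{\circ n}\!\left( \psi_{c}(\boldsymbol{\varepsilon}) \right)$ is nonzero and its sign equals $\epsilon_{n}$ for every $n \geq 0$. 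Therefore $\psi_{c}(\boldsymbol{\varepsilon})$ determines the whole sequence $\boldsymbol{\varepsilon}$, which proves injectivity. It follows that $\psi_{c}\!\left( \sxv{k}{p} \right)$ has exactly $2^{k +p} = \left\lvert \sxv{k}{p} \right\rvert$ elements; since it is contained in $\xv{c}{k}{p}$, which has at most $2^{k +p}$ elements, the two sets coincide, and the inclusion into $\left[ -\beta_{c}, \beta_{c} \right]$ then follows from the first step.

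It remains to settle the case $c = -2$, where this reasoning degenerates: one has $\sqrt{-\beta_{-2} -(-2)} = 0$, so $\psi_{-2}$ need not be injective — in fact two sign sequences are mapped to $0$ — and $\xv{-2}{k}{p}$ can have strictly fewer than $2^{k +p}$ elements. The plan here is to pass to the limit. Set $G(c, z) = F_{k +p}(c, z) -F_{k}(c, z)$, which is monic in $z$ of degree $2^{k +p}$ with coefficients depending polynomially, hence continuously, on $c$. By the case just treated, for every $c \in (-\infty, -2)$ the polynomial $G(c, \cdot)$ has the $2^{k +p}$ pairwise distinct roots $\zeta_{\boldsymbol{\varepsilon}}(c)$, $\boldsymbol{\varepsilon} \in \sxv{k}{p}$, so $G(c, z) = \prod_{\boldsymbol{\varepsilon} \in \sxv{k}{p}} \left( z -\zeta_{\boldsymbol{\varepsilon}}(c) \right)$. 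Letting $c \to -2$ from below and using the continuity of the maps $\zeta_{\boldsymbol{\varepsilon}}$ asserted in Theorem~\ref{theorem:dynashift}, I get $G(-2, z) = \prod_{\boldsymbol{\varepsilon} \in \sxv{k}{p}} \left( z -\psi_{-2}(\boldsymbol{\varepsilon}) \right)$, so the zero set $\xv{-2}{k}{p}$ of $G(-2, \cdot)$ is precisely $\psi_{-2}\!\left( \sxv{k}{p} \right)$, which lies in $\left[ -2, 2 \right]$ by the first step. I expect the $c = -2$ case to be the only genuine obstacle: the cardinality bookkeeping that works for $c < -2$ breaks down there, and it is the continuity of the maps $\zeta_{\boldsymbol{\varepsilon}}$ — the last clause of Theorem~\ref{theorem:dynashift} — that rescues the argument.
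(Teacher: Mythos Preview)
Your proof is correct and follows essentially the same route as the paper: the semiconjugacy gives the inclusion $\psi_{c}\bigl(\sxv{k}{p}\bigr)\subset\xv{c}{k}{p}$, the strict positivity of $\sqrt{-\beta_{c}-c}$ for $c<-2$ yields injectivity and hence equality by counting, and the boundary case $c=-2$ is handled by continuity of the maps $\zeta_{\boldsymbol{\varepsilon}}$. The only cosmetic difference is in the $c=-2$ step: the paper fixes $z\in\xv{-2}{k}{p}$ and bounds $\min_{\boldsymbol{\varepsilon}}\lvert z-\psi_{c}(\boldsymbol{\varepsilon})\rvert$ by $\lvert F_{k+p}(c,z)-F_{k}(c,z)\rvert^{1/2^{k+p}}$ before letting $c\to-2$, whereas you pass the full factorisation $G(c,z)=\prod_{\boldsymbol{\varepsilon}}\bigl(z-\zeta_{\boldsymbol{\varepsilon}}(c)\bigr)$ to the limit; both arguments rest on exactly the same two ingredients.
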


\begin{figure}
\fbox{\includegraphics{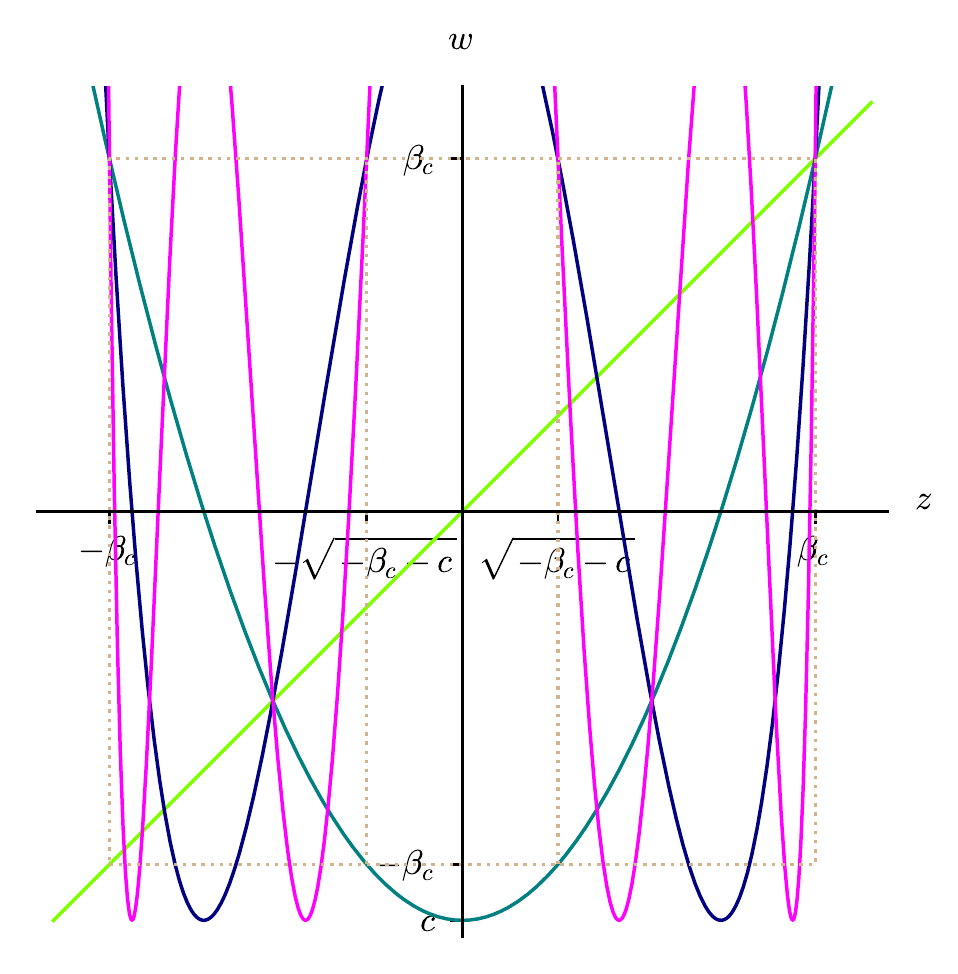}}
\caption{Graphs of the maps $z \mapsto F_{n}(c, z)$, with $n \in \lbrace 0, \dotsc, 3 \rbrace$, when $c \in (-\infty, -2]$.}
\label{figure:dynashift}
\end{figure}

\begin{proof}
For all $n \geq 0$, we have $f_{c}^{\circ n} \circ \psi_{c} = \psi_{c} \circ \sigma^{\circ n}$. Consequently, $\psi_{c}\left( \sxv{k}{p} \right) \subset \xv{c}{k}{p}$ for all $k \geq 0$ and $p \geq 1$.

Now, suppose that $c \in (-\infty, -2)$. Then, for all $\boldsymbol{\varepsilon} \in \sxu$ and all $n \geq 0$, we have \[ \epsilon_{n} f_{c}^{\circ n}\left( \psi_{c}(\boldsymbol{\varepsilon}) \right) \in \left[ \sqrt{-\beta_{c} -c}, \beta_{c} \right] \subset \mathbb{R}_{> 0} \, \text{.} \] Therefore, the map $\psi_{c}$ is injective, and, since $\xv{c}{k}{p}$ contains at most $2^{k +p}$ elements, it follows that $\psi_{c}\left( \sxv{k}{p} \right) = \xv{c}{k}{p}$, for all $k \geq 0$ and $p \geq 1$.

It remains to prove that $\xv{-2}{k}{p} \subset \psi_{-2}\left( \sxv{k}{p} \right)$ for all $k \geq 0$ and $p \geq 1$. Fix $k \geq 0$ and $p \geq 1$, and suppose that $z \in \xv{-2}{k}{p}$. Then, for all $c \in (-\infty, -2)$, we have \[ \min_{\boldsymbol{\varepsilon} \in \sxv{k}{p}} \left\lvert z -\psi_{c}(\boldsymbol{\varepsilon}) \right\rvert \leq \left( \prod_{\boldsymbol{\varepsilon} \in \sxv{k}{p}} \left\lvert z -\psi_{c}(\boldsymbol{\varepsilon}) \right\rvert \right)^{\frac{1}{2^{k +p}}} = \left\lvert F_{k +p}(c, z) -F_{k}(c, z) \right\rvert^{\frac{1}{2^{k +p}}} \, \text{.} \] As the maps $\zeta_{\boldsymbol{\varepsilon}}$, with $\boldsymbol{\varepsilon} \in \sxv{k}{p}$, are continuous at $-2$, it follows that $z \in \psi_{-2}\left( \sxv{k}{p} \right)$. Thus, the proposition is proved.
\end{proof}

\begin{remark}
Applying Montel's theorem, it follows from Proposition~\ref{proposition:dynashift} that, for every $c \in (-\infty, -2]$, the filled-in Julia set of $f_{c}$~-- that is, the set of points $z \in \mathbb{C}$ that have bounded forward orbit under $f_{c}$~-- is also contained in $\left[ -\beta_{c}, \beta_{c} \right]$.
\end{remark}

Note that the map $\psi_{-2}$ is not injective. More precisely, we have the following:

\begin{proposition}
\label{proposition:dynapart}
For all $\boldsymbol{\varepsilon} \neq \boldsymbol{\varepsilon}^{\prime} \in \sxu$, $\psi_{-2}(\boldsymbol{\varepsilon}) = \psi_{-2}\left( \boldsymbol{\varepsilon}^{\prime} \right)$ if and only if there exists an integer $k \geq 2$ such that $\boldsymbol{\varepsilon}, \boldsymbol{\varepsilon}^{\prime} \in \sxv{k}{1}$, $\epsilon_{j} = \epsilon_{j}^{\prime}$ for all $j \in \lbrace 0, \dotsc, k -3 \rbrace$, $\epsilon_{k -2} = -\epsilon_{k -2}^{\prime}$, $\epsilon_{k -1} = \epsilon_{k -1}^{\prime} = -1$ and $\epsilon_{k} = \epsilon_{k}^{\prime} = 1$.
\end{proposition}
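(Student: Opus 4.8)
The plan is to trace every failure of injectivity of $\psi_{-2}$ back to the fact that on $\left[ -\beta_{-2}, \beta_{-2} \right] = \left[ -2, 2 \right]$ the two partial inverses $g_{-2}^{+1}(z) = \sqrt{z +2}$ and $g_{-2}^{-1}(z) = -\sqrt{z +2}$ of $f_{-2}$ coincide only at $z = -2$, where both equal $0$; together with the relation $\psi_{-2}(\boldsymbol{\varepsilon}) = g_{-2}^{\epsilon_{0}} \circ \dotsb \circ g_{-2}^{\epsilon_{n}}\left( \psi_{-2}\left( \sigma^{\circ (n +1)}(\boldsymbol{\varepsilon}) \right) \right)$ established in the proof of Theorem~\ref{theorem:dynashift} (valid for all $\boldsymbol{\varepsilon} \in \sxu$ and all $n \geq 0$), this will show that any collision must originate from the point $-2$. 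The first step is therefore to compute the fibers of $\psi_{-2}$ over $2$ and over $-2$. If $\psi_{-2}(\boldsymbol{\eta}) = 2$, then the normalization $\eta_{0} \psi_{-2}(\boldsymbol{\eta}) \geq 0$ forces $\eta_{0} = 1$, and $f_{-2}(2) = 2$ with the semiconjugacy gives $\psi_{-2}(\sigma(\boldsymbol{\eta})) = 2$, so by induction $\eta_{n} = 1$ for all $n \geq 0$; if $\psi_{-2}(\boldsymbol{\delta}) = -2$, then $\delta_{0} = -1$ and $\psi_{-2}(\sigma(\boldsymbol{\delta})) = f_{-2}(-2) = 2$, so $\delta_{n} = 1$ for all $n \geq 1$. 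Hence $\psi_{-2}$ assumes the value $2$ only at $(1, 1, \dotsc)$ and the value $-2$ only at $\boldsymbol{\delta}_{\star} := (-1, 1, 1, \dotsc)$; and conversely $\psi_{-2}(\boldsymbol{\delta}_{\star}) = g_{-2}^{-1}(2) = -2$, since $2$ is the fixed point of $g_{-2}^{+1}$ on $\left[ -2, 2 \right]$.

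For the implication ``$\Leftarrow$'', suppose that $\boldsymbol{\varepsilon}$ and $\boldsymbol{\varepsilon}^{\prime}$ satisfy the listed conditions for some $k \geq 2$. Then $\sigma^{\circ (k -1)}(\boldsymbol{\varepsilon}) = \sigma^{\circ (k -1)}\left( \boldsymbol{\varepsilon}^{\prime} \right) = \boldsymbol{\delta}_{\star}$, so applying the relation above with $n = k -2$ and using $\psi_{-2}(\boldsymbol{\delta}_{\star}) = -2$ and $g_{-2}^{\epsilon_{k -2}}(-2) = g_{-2}^{\epsilon_{k -2}^{\prime}}(-2) = 0$, we get \[ \psi_{-2}(\boldsymbol{\varepsilon}) = g_{-2}^{\epsilon_{0}} \circ \dotsb \circ g_{-2}^{\epsilon_{k -3}}(0) = g_{-2}^{\epsilon_{0}^{\prime}} \circ \dotsb \circ g_{-2}^{\epsilon_{k -3}^{\prime}}(0) = \psi_{-2}\left( \boldsymbol{\varepsilon}^{\prime} \right) \, \text{,} \] the middle composition being the identity map of $\left[ -2, 2 \right]$ when $k = 2$ and the two outer compositions agreeing because $\epsilon_{j} = \epsilon_{j}^{\prime}$ for all $j \in \lbrace 0, \dotsc, k -3 \rbrace$; moreover $\boldsymbol{\varepsilon} \neq \boldsymbol{\varepsilon}^{\prime}$ since they differ at index $k -2$.

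For the implication ``$\Rightarrow$'', assume that $\psi_{-2}(\boldsymbol{\varepsilon}) = \psi_{-2}\left( \boldsymbol{\varepsilon}^{\prime} \right) =: w$ with $\boldsymbol{\varepsilon} \neq \boldsymbol{\varepsilon}^{\prime}$, and let $m \geq 0$ be the least index at which $\epsilon_{m} \neq \epsilon_{m}^{\prime}$, so that $\epsilon_{m}^{\prime} = -\epsilon_{m}$. From $f_{-2}^{\circ j} \circ \psi_{-2} = \psi_{-2} \circ \sigma^{\circ j}$ we obtain, writing $w_{j} := f_{-2}^{\circ j}(w)$, that $\psi_{-2}\left( \sigma^{\circ j}(\boldsymbol{\varepsilon}) \right) = w_{j} = \psi_{-2}\left( \sigma^{\circ j}\left( \boldsymbol{\varepsilon}^{\prime} \right) \right)$ for every $j \geq 0$. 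Hence $w_{m} = g_{-2}^{\epsilon_{m}}(w_{m +1}) = \epsilon_{m} \sqrt{w_{m +1} +2}$ and also $w_{m} = \epsilon_{m}^{\prime} \sqrt{w_{m +1} +2} = -\epsilon_{m} \sqrt{w_{m +1} +2}$, which forces $w_{m +1} = -2$. By the fiber computation of the first paragraph, this yields $\sigma^{\circ (m +1)}(\boldsymbol{\varepsilon}) = \sigma^{\circ (m +1)}\left( \boldsymbol{\varepsilon}^{\prime} \right) = \boldsymbol{\delta}_{\star}$, so $\epsilon_{n} = \epsilon_{n}^{\prime}$ for all $n \geq m +1$, with $\epsilon_{m +1} = \epsilon_{m +1}^{\prime} = -1$ and $\epsilon_{n} = \epsilon_{n}^{\prime} = 1$ for $n \geq m +2$. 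Therefore $\boldsymbol{\varepsilon}$ and $\boldsymbol{\varepsilon}^{\prime}$ both lie in $\sxv{m +2}{1}$ and differ only at index $m$, so the integer $k = m +2 \geq 2$ witnesses the description in the statement.

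I expect the actual content to lie entirely in the fiber computation $\psi_{-2}^{-1}\left( \lbrace -2 \rbrace \right) = \lbrace \boldsymbol{\delta}_{\star} \rbrace$: this is where the sign normalization $\epsilon_{0} \psi_{c}(\boldsymbol{\varepsilon}) \geq 0$ from Theorem~\ref{theorem:dynashift} is indispensable, as it is precisely what rules out any coincidence other than the one forced by $g_{-2}^{+1}(-2) = g_{-2}^{-1}(-2)$; the remainder is a bookkeeping argument organized around the first index at which the two sign sequences disagree.
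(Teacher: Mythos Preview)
Your proof is correct and essentially the same as the paper's. The only organizational difference is that you isolate the fiber computation $\psi_{-2}^{-1}(\{-2\}) = \{\boldsymbol{\delta}_{\star}\}$ as a preliminary step and index the argument by the first place $m$ where the two sequences disagree, whereas the paper indexes by the first place $k$ where the orbit $f_{-2}^{\circ k}(\psi_{-2}(\boldsymbol{\varepsilon}))$ hits $0$ and reads off the tail directly from the sign normalization; these two indices coincide, and the underlying logic is identical.
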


\begin{proof}
Suppose that $\boldsymbol{\varepsilon} \neq \boldsymbol{\varepsilon}^{\prime} \in \sxu$ satisfy $\psi_{-2}(\boldsymbol{\varepsilon}) = \psi_{-2}\left( \boldsymbol{\varepsilon}^{\prime} \right)$. Then, for all $n \geq 0$, we have \[ \epsilon_{n} f_{-2}^{\circ n}\left( \psi_{-2}(\boldsymbol{\varepsilon}) \right) \geq 0 \quad \text{and} \quad \epsilon_{n}^{\prime} f_{-2}^{\circ n}\left( \psi_{-2}(\boldsymbol{\varepsilon}) \right) \geq 0 \, \text{.} \] Since $\boldsymbol{\varepsilon} \neq \boldsymbol{\varepsilon}^{\prime}$, it follows that there is an integer $k \geq 0$, which we may assume minimal, such that $f_{-2}^{\circ k}\left( \psi_{-2}(\boldsymbol{\varepsilon}) \right) = 0$. For all $j \in \lbrace 0, \dotsc, k -1 \rbrace$, the inequalities above are strict, and hence $\epsilon_{j} = \epsilon_{j}^{\prime}$. Moreover, we have $f_{-2}^{\circ (k +1)}\left( \psi_{-2}(\boldsymbol{\varepsilon}) \right) = -2$ and $f_{-2}^{\circ n}\left( \psi_{-2}(\boldsymbol{\varepsilon}) \right) = 2$ for all $n \geq k +2$, which yields $\epsilon_{k +1} = \epsilon_{k +1}^{\prime} = -1$ and $\epsilon_{n} = \epsilon_{n}^{\prime} = 1$ for all $n \geq k +2$. Thus, the sign sequences $\boldsymbol{\varepsilon}$ and $\boldsymbol{\varepsilon}^{\prime}$ have the desired form.

Conversely, observe that, for $\boldsymbol{\varepsilon} \in \sxv{2}{1}$ with $\epsilon_{1} = -1$ and $\epsilon_{2} = 1$, we have \[ \psi_{-2}(\boldsymbol{\varepsilon}) = \epsilon_{0} \sqrt{-\beta_{-2} -(-2)} = 0 \, \text{.} \] Therefore, if $k \geq 2$ and $\boldsymbol{\varepsilon} \in \sxv{k}{1}$ satisfies $\epsilon_{k -1} = -1$ and $\epsilon_{k} = 1$, then \[ \psi_{-2}(\boldsymbol{\varepsilon}) = g_{-2}^{\left( \epsilon_{0}, \dotsc, \epsilon_{k -3} \right)}\left( \psi_{-2}\left( \sigma^{\circ (k -2)}(\boldsymbol{\varepsilon}) \right) \right) = g_{-2}^{\left( \epsilon_{0}, \dotsc, \epsilon_{k -3} \right)}(0) \] does not depend on $\epsilon_{k -2}$. This completes the proof of the proposition.
\end{proof}

\begin{remark}
It follows from Proposition~\ref{proposition:dynashift} and Proposition~\ref{proposition:dynapart} that, for all $k \geq 0$ and $p \geq 1$, the set $\xv{-2}{k}{p}$ contains exactly $2^{p}$ elements if $k = 0$ and $2^{k +p} -2^{k -1} +1$ elements if $k \geq 1$.
\end{remark}

\begin{remark}
Note that we can actually describe the map $\psi_{-2} \colon \sxu \rightarrow \mathbb{R}$ explicitly. For $\boldsymbol{\varepsilon} \in \sxu$, define the sequence $\left( \delta_{n}(\boldsymbol{\varepsilon}) \right)_{n \geq 0} \in \lbrace 0, 1 \rbrace^{\mathbb{Z}_{\geq 0}}$ by \[ \delta_{n}(\boldsymbol{\varepsilon}) = \begin{cases} \delta_{n -1}(\boldsymbol{\varepsilon}) & \text{if } \epsilon_{n} = 1\\ 1 -\delta_{n -1}(\boldsymbol{\varepsilon}) & \text{if } \epsilon_{n} = -1 \end{cases} \, \text{,} \] where $\delta_{-1}(\boldsymbol{\varepsilon}) = 0$ by convention. Then the map $\psi_{-2} \colon \sxu \rightarrow \mathbb{R}$ is given by \[ \psi_{-2}(\boldsymbol{\varepsilon}) = 2 \cos\left( \pi \sum_{n = 0}^{+\infty} \frac{\delta_{n}(\boldsymbol{\varepsilon})}{2^{n +1}} \right) \, \text{.} \]
\end{remark}

\section{Back to the parameter space}
\label{section:paraspace}

We shall now exploit the statements given in Section~\ref{section:dynaspace} to get results concerning the parameter space.

\begin{remark}
By definition, for every point $a \in \mathbb{C}$ and every parameter $c \in \mathbb{C}$, $c \in \su{a}$ if and only if $a \in \xu{c}$ and, for all $k \geq 0$ and $p \geq 1$, $c \in \sv{a}{k}{p}$ if and only if $a \in \xv{c}{k}{p}$.
\end{remark}

\begin{proposition}
\label{proposition:paraloc}
For every $a \in \mathbb{C}$, we have \[ \su{a} \subset \left\lbrace c \in \mathbb{C} : \lvert c \rvert \leq R_{a} \right\rbrace \, \text{,} \] where $R_{a} = \lvert a \rvert^{2} +\sqrt{\lvert a \rvert^{2} +1} +1$.
\end{proposition}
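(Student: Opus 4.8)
The plan is to deduce this directly from Proposition~\ref{proposition:dynaloc}, which controls the entire forward orbit of a preperiodic point. Suppose $c \in \su{a}$, so that $a \in \xu{c}$. Applying Proposition~\ref{proposition:dynaloc} with $n = 1$ gives $\left\lvert f_{c}(a) \right\rvert = \left\lvert a^{2} +c \right\rvert \leq \rho_{c}$, and hence, by the triangle inequality, \[ \lvert c \rvert \leq \left\lvert a^{2} +c \right\rvert +\left\lvert a^{2} \right\rvert \leq \lvert a \rvert^{2} +\rho_{c} \, \text{.} \] Everything then reduces to extracting an explicit bound on $\lvert c \rvert$ from this single inequality, keeping in mind that $\rho_{c} = \frac{1 +\sqrt{1 +4 \lvert c \rvert}}{2}$ itself depends on $\lvert c \rvert$.

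To carry this out I would substitute $u = \sqrt{1 +4 \lvert c \rvert} \geq 1$, so that $\lvert c \rvert = \frac{u^{2} -1}{4}$. After clearing denominators, the inequality $\lvert c \rvert \leq \lvert a \rvert^{2} +\frac{1 +u}{2}$ becomes $u^{2} -2 u -\left( 4 \lvert a \rvert^{2} +3 \right) \leq 0$, a quadratic in $u$ with roots $1 \pm 2 \sqrt{\lvert a \rvert^{2} +1}$. Since $u \geq 1 > 0$, this forces $u \leq 1 +2 \sqrt{\lvert a \rvert^{2} +1}$, and therefore \[ \lvert c \rvert = \frac{u^{2} -1}{4} \leq \frac{\left( 1 +2 \sqrt{\lvert a \rvert^{2} +1} \right)^{2} -1}{4} = \lvert a \rvert^{2} +\sqrt{\lvert a \rvert^{2} +1} +1 = R_{a} \, \text{,} \] which is exactly the desired bound.

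The argument is essentially immediate once Proposition~\ref{proposition:dynaloc} is in hand, so there is no genuine obstacle; the only point deserving a line of care is that the substitution must be combined with the constraint $u \geq 1$ in order to select the correct root of the quadratic. As an alternative to the substitution, one may observe that $t \mapsto 2 t -2 \lvert a \rvert^{2} -1 -\sqrt{1 +4 t}$ is strictly increasing on $[0, +\infty)$ and vanishes at $t = R_{a}$, so that $\lvert c \rvert > R_{a}$ would contradict the inequality $2 \lvert c \rvert -2 \lvert a \rvert^{2} -1 \leq \sqrt{1 +4 \lvert c \rvert}$ obtained above.
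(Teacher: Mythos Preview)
Your proof is correct and follows essentially the same route as the paper's: both start from Proposition~\ref{proposition:dynaloc} at $n=1$ to obtain $\lvert c\rvert - \lvert a\rvert^{2} \le \rho_{c}$, and then solve this implicit inequality for $\lvert c\rvert$. The paper packages the last step as the observation that $\varphi(x)=x-\tfrac{1+\sqrt{1+4x}}{2}$ is strictly increasing with $\varphi(R_{a})=\lvert a\rvert^{2}$, which is exactly the ``alternative'' you mention at the end; your main argument via the substitution $u=\sqrt{1+4\lvert c\rvert}$ is just a direct algebraic unwinding of the same inequality.
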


\begin{proof}
Suppose that $c \in \su{a}$. Then, by Proposition~\ref{proposition:dynaloc}, we have \[ \lvert c \rvert -\lvert a \rvert^{2} \leq \left\lvert f_{c}(a) \right\rvert \leq \rho_{c} \, \text{,} \] and hence $\varphi\left( \lvert c \rvert \right) \leq \lvert a \rvert^{2}$, where $\varphi \colon \mathbb{R}_{\geq 0} \rightarrow \mathbb{R}$ is given by \[ \varphi(x) = x -\frac{1 +\sqrt{1 +4 x}}{2} \, \text{.} \] The map $\varphi$ is strictly increasing and satisfies $\varphi\left( R_{a} \right) = \lvert a \rvert^{2}$. Thus, the proposition is proved.
\end{proof}

Now, let us give a more extensive description of $\su{a}$ when $a \in (-\infty, -2] \cup [2, +\infty)$.

Given $\epsilon = \pm 1$, let $\sxvs{\epsilon}{k}{p}$~-- with $k \geq 0$ and $p \geq 1$~-- be the set defined by \[ \sxvs{\epsilon}{k}{p} = \left\lbrace \boldsymbol{\varepsilon} = \left( \epsilon_{n} \right)_{n \geq 0} \in \sxv{k}{p} : \epsilon_{0} = \epsilon \right\rbrace \, \text{,} \] and let $\sxus{\epsilon}$ be the set defined by \[ \sxus{\epsilon} = \bigcup_{k \geq 0, \, p \geq 1} \sxvs{\epsilon}{k}{p} = \left\lbrace \boldsymbol{\varepsilon} \in \sxu : \epsilon_{0} = \epsilon \right\rbrace \, \text{.} \] For all $k \geq 0$ and $p \geq 1$, the set $\sxvs{\epsilon}{k}{p}$ contains exactly $2^{k +p -1}$ elements~-- each of them being completely determined by the choice of its terms with index in $\lbrace 1, \dotsc, k +p -1 \rbrace$.

Suppose that $a \in (-\infty, -2] \cup [2, +\infty)$. Then 
\begin{itemize}
\item for $\boldsymbol{\varepsilon} \in \sxvs{\sgn(a)}{2}{1}$ given by $\epsilon_{1} = -1$ and $\epsilon_{2} = 1$, the map \[ \sgn(a) \zeta_{\boldsymbol{\varepsilon}} \colon c \mapsto \sqrt{-\beta_{c} -c} \] is strictly decreasing on $(-\infty, -2]$ and we have $\zeta_{\boldsymbol{\varepsilon}}\left( c_{a}^{-} \right) = a$, where $c_{a}^{-}$ is the parameter defined by \[ c_{a}^{-} = -a^{2} -\sqrt{a^{2} +1} -1 \in \sv{a}{2}{1} \, \text{;} \]
\item for $\boldsymbol{\varepsilon} \in \sxvs{\sgn(a)}{1}{1}$ given by $\epsilon_{1} = 1$, the map \[ \sgn(a) \zeta_{\boldsymbol{\varepsilon}} \colon c \mapsto \beta_{c} \] is strictly decreasing on $(-\infty, -2]$ and we have $\zeta_{\boldsymbol{\varepsilon}}\left( c_{a}^{+} \right) = a$, where $c_{a}^{+}$ is the parameter defined by \[ c_{a}^{+} = -a^{2} +\lvert a \rvert \in \sv{a}{1}{1} \, \text{.} \]
\end{itemize}

\begin{remark}
Note that, for every $a \in \mathbb{C}$ with $\lvert a \rvert \geq 2$, we have $R_{a} = -c_{\lvert a \rvert}^{-}$.
\end{remark}

\begin{theorem}
\label{theorem:parashift}
Assume that $a \in (-\infty, -2] \cup [2, +\infty)$. Then there is a unique map \[ \gamma_{a} \colon \sxus{\sgn(a)} \rightarrow (-\infty, -2] \] that satisfies $\zeta_{\boldsymbol{\varepsilon}}\left( \gamma_{a}(\boldsymbol{\varepsilon}) \right) = a$ for all $\boldsymbol{\varepsilon} \in \sxus{\sgn(a)}$ (see Figure~\ref{figure:parashift1}).

Furthermore, we have \[ \sv{a}{k}{p} = \gamma_{a}\left( \sxvs{\sgn(a)}{k}{p} \right) \subset \left[ c_{a}^{-}, c_{a}^{+} \right] \, \text{,} \] for all $k \geq 0$ and $p \geq 1$, (see Figure~\ref{figure:parashift2}) and the map $\gamma_{a}$ is injective.
\end{theorem}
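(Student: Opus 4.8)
The plan is to invert the relation encoded in Theorem~\ref{theorem:dynashift}: there, for fixed $c$, the map $\psi_{c}$ turns sign sequences into preperiodic points of $f_{c}$; here, for fixed $a$ with $\lvert a \rvert \geq 2$, we want to turn a sign sequence $\boldsymbol{\varepsilon}$ starting with $\sgn(a)$ into the unique parameter $c$ for which $\psi_{c}(\boldsymbol{\varepsilon}) = a$. So the first step is to record, for each $\boldsymbol{\varepsilon} \in \sxus{\sgn(a)}$, the behaviour of the continuous map $\zeta_{\boldsymbol{\varepsilon}} \colon (-\infty, -2] \to \mathbb{R}$ from Theorem~\ref{theorem:dynashift}. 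By that theorem, $\sgn(a)\zeta_{\boldsymbol{\varepsilon}}(c) = \epsilon_{0}\psi_{c}(\boldsymbol{\varepsilon}) \in \left[ \sqrt{-\beta_{c}-c},\, \beta_{c} \right]$ for all $c \leq -2$. I would first show that $\sgn(a)\zeta_{\boldsymbol{\varepsilon}}$ is \emph{strictly decreasing} on $(-\infty,-2]$, with $\sgn(a)\zeta_{\boldsymbol{\varepsilon}}(-2) \in [0,2]$ and $\sgn(a)\zeta_{\boldsymbol{\varepsilon}}(c) \to +\infty$ as $c \to -\infty$. Monotonicity is the technical heart: one writes $\psi_{c}(\boldsymbol{\varepsilon}) = g_{c}^{\epsilon_{0}}\circ\dots\circ g_{c}^{\epsilon_{n}}(\psi_{c}(\sigma^{\circ(n+1)}(\boldsymbol{\varepsilon})))$ and notes that each branch $g_{c}^{\epsilon}(z) = \epsilon\sqrt{z-c}$ has, for fixed $z$ in the relevant range, $\lvert g_{c}^{\epsilon}(z)\rvert = \sqrt{z-c}$ strictly increasing as $c$ decreases; combined with the fact (from Lemma~\ref{lemma:dynashift} and Claim~\ref{claim:dynashift}) that the cyclic part $\mathfrak{z}_{\boldsymbol{\varepsilon}_{\boldsymbol{p}}}(c)$ also moves monotonically, and that all the intermediate iterates $\epsilon_{j}f_{c}^{\circ j}(\psi_{c}(\boldsymbol{\varepsilon}))$ stay strictly positive in $(-\infty,-2)$, one gets that the absolute value of the whole composition strictly increases as $c$ decreases. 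The growth to $+\infty$ follows since $\beta_{c} \geq \sgn(a)\zeta_{\boldsymbol{\varepsilon}}(c) \geq \sqrt{-\beta_{c}-c}$ and $\beta_{c},\sqrt{-\beta_{c}-c} \to +\infty$; actually the cleaner bound is the lower one, which already forces divergence.

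Given monotonicity and the limiting values, the intermediate value theorem produces, for each $\boldsymbol{\varepsilon} \in \sxus{\sgn(a)}$, a unique $c \in (-\infty,-2]$ with $\zeta_{\boldsymbol{\varepsilon}}(c) = a$; define $\gamma_{a}(\boldsymbol{\varepsilon})$ to be that parameter. Uniqueness of the \emph{map} $\gamma_{a}$ is then immediate from uniqueness of each value. For the inclusion $\sv{a}{k}{p} = \gamma_{a}(\sxvs{\sgn(a)}{k}{p})$, I would argue as follows. If $\boldsymbol{\varepsilon} \in \sxvs{\sgn(a)}{k}{p}$ then $a = \psi_{\gamma_{a}(\boldsymbol{\varepsilon})}(\boldsymbol{\varepsilon}) \in \psi_{\gamma_{a}(\boldsymbol{\varepsilon})}(\sxv{k}{p}) = \xv{\gamma_{a}(\boldsymbol{\varepsilon})}{k}{p}$ by Proposition~\ref{proposition:dynashift}, so $\gamma_{a}(\boldsymbol{\varepsilon}) \in \sv{a}{k}{p}$; this gives $\gamma_{a}(\sxvs{\sgn(a)}{k}{p}) \subset \sv{a}{k}{p}$. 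Conversely, if $c \in \sv{a}{k}{p}$ then $a \in \xv{c}{k}{p} = \psi_{c}(\sxv{k}{p})$ by Proposition~\ref{proposition:dynashift}, so $a = \psi_{c}(\boldsymbol{\varepsilon})$ for some $\boldsymbol{\varepsilon} \in \sxv{k}{p}$; since $a$ and $\epsilon_{0}\psi_{c}(\boldsymbol{\varepsilon})$ are both nonnegative (as $\lvert a\rvert \geq 2 > 0$ and $a = \psi_c(\boldsymbol{\varepsilon})$ has the sign of $\epsilon_0$), we get $\epsilon_{0} = \sgn(a)$, i.e. $\boldsymbol{\varepsilon} \in \sxvs{\sgn(a)}{k}{p}$, and then $\zeta_{\boldsymbol{\varepsilon}}(c) = \psi_{c}(\boldsymbol{\varepsilon}) = a$ forces $c = \gamma_{a}(\boldsymbol{\varepsilon})$ by the uniqueness established above. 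The containment $\sv{a}{k}{p} \subset [c_{a}^{-},c_{a}^{+}]$ then comes from monotonicity: among the sequences in $\sxvs{\sgn(a)}{k}{p}$, the one forcing the largest value of $-\beta_{c}-c$ (hence the most negative $c$) is handled by the explicit computation $\zeta_{\boldsymbol{\varepsilon}}(c_{a}^{-}) = a$ for $\boldsymbol{\varepsilon}$ with $\epsilon_{1}=-1$, $\epsilon_{2}=1$, since $\sgn(a)\zeta_{\boldsymbol{\varepsilon}'}(c) \in [\sqrt{-\beta_c-c},\beta_c]$ shows every preimage parameter lies between the preimage of the lower envelope and the preimage of the upper envelope — and those envelopes' preimages are exactly $c_a^-$ and $c_a^+$, already computed before the theorem statement.

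Finally, injectivity of $\gamma_{a}$: if $\gamma_{a}(\boldsymbol{\varepsilon}) = \gamma_{a}(\boldsymbol{\varepsilon}') = c$, then $\psi_{c}(\boldsymbol{\varepsilon}) = a = \psi_{c}(\boldsymbol{\varepsilon}')$. If $c < -2$ then $\psi_{c}$ is injective by Proposition~\ref{proposition:dynashift}, so $\boldsymbol{\varepsilon} = \boldsymbol{\varepsilon}'$. If $c = -2$, we invoke Proposition~\ref{proposition:dynapart}: two distinct sequences with the same image under $\psi_{-2}$ must both lie in some $\sxv{k}{1}$ with $\epsilon_{k-1}=\epsilon_{k-1}'=-1$, $\epsilon_{k}=\epsilon_{k}'=1$, whence $\psi_{-2}(\boldsymbol{\varepsilon}) = g_{-2}^{(\epsilon_{0},\dots,\epsilon_{k-3})}(0) = 0 \neq a$, a contradiction since $\lvert a\rvert\geq 2$. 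So $\gamma_{a}$ is injective. The main obstacle, as flagged, is the monotonicity of $\sgn(a)\zeta_{\boldsymbol{\varepsilon}}$ on $(-\infty,-2]$ — establishing it uniformly over all sequences $\boldsymbol{\varepsilon}$ requires carefully propagating the sign and monotonicity information through the infinite composition of branches $g_c^{\epsilon_j}$, using Claim~\ref{claim:dynashift} to keep every iterate strictly inside the right half-range; everything else is a bookkeeping consequence of results already in Section~\ref{section:dynaspace}.
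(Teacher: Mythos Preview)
Your approach differs substantially from the paper's and carries two genuine gaps. First, the monotonicity of $\sgn(a)\zeta_{\boldsymbol{\varepsilon}}$ on $(-\infty,-2]$ is never established: the sketch ``each branch $g_{c}^{\epsilon}$ has $\lvert g_{c}^{\epsilon}(z)\rvert$ increasing as $c$ decreases, for fixed $z$'' does not suffice, because the inner argument $z = \psi_{c}\bigl(\sigma(\boldsymbol{\varepsilon})\bigr)$ itself moves with $c$, and with opposite signs of $\epsilon_{1}$ the two effects compete. You would need an inductive bound of the type $\bigl\lvert \tfrac{d}{dc}\psi_{c}(\boldsymbol{\varepsilon})\bigr\rvert < 1$ (so that $\psi_{c}\bigl(\sigma(\boldsymbol{\varepsilon})\bigr) - c$ is always decreasing in $c$), and you have not supplied one. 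Second, your argument for the inclusion $\sv{a}{k}{p} \subset \gamma_{a}\bigl(\sxvs{\sgn(a)}{k}{p}\bigr)$ is circular: you start from $c \in \sv{a}{k}{p}$ and invoke Proposition~\ref{proposition:dynashift} to write $a = \psi_{c}(\boldsymbol{\varepsilon})$, but that proposition applies only for $c \in (-\infty,-2]$, which is part of what you are trying to prove (a priori $c$ is merely a complex root of $F_{k+p}(c,a)-F_{k}(c,a)$). There is also a slip in the $c=-2$ injectivity case: $g_{-2}^{(\epsilon_{0},\dotsc,\epsilon_{k-3})}(0)$ is not $0$ when $k\geq 3$; the correct observation is that $\lvert a\rvert\geq 2$ forces $\lvert a\rvert=\beta_{-2}=2$, and $\pm\beta_{-2}$ has a unique $\psi_{-2}$--preimage by Proposition~\ref{proposition:dynapart}.

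The paper avoids all of this. It never proves (or needs) monotonicity of $\zeta_{\boldsymbol{\varepsilon}}$. Instead it uses only the envelope bounds $\sqrt{-\beta_{c}-c}\leq \sgn(a)\zeta_{\boldsymbol{\varepsilon}}(c)\leq\beta_{c}$ at the two specific parameters $c_{a}^{-}$ and $c_{a}^{+}$ to get, via the intermediate value theorem, \emph{some} $\gamma_{a}(\boldsymbol{\varepsilon})\in[c_{a}^{-},c_{a}^{+}]$ with $\zeta_{\boldsymbol{\varepsilon}}\bigl(\gamma_{a}(\boldsymbol{\varepsilon})\bigr)=a$. The key step is then a short claim (Claim~\ref{claim:parashift}): for $\lvert a\rvert\geq 2$ and any $\gamma\leq-2$, the point $a$ has at most one preimage under $\psi_{\gamma}$. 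This immediately makes $\gamma_{a}$ injective, so it sends the $2^{k+p-1}$ elements of $\sxvs{\sgn(a)}{k}{p}$ to $2^{k+p-1}$ distinct elements of $\sv{a}{k}{p}$; since $\lvert\sv{a}{k}{p}\rvert\leq 2^{k+p-1}$, the map is onto $\sv{a}{k}{p}$, which simultaneously proves $\sv{a}{k}{p}\subset[c_{a}^{-},c_{a}^{+}]$ and the uniqueness of each $\gamma_{a}(\boldsymbol{\varepsilon})$ --- no monotonicity required. Your injectivity argument is essentially this claim, so the cleanest repair of your proof is simply to drop monotonicity and replace your direct reverse-inclusion argument by this counting step.
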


\begin{figure}
\fbox{\includegraphics{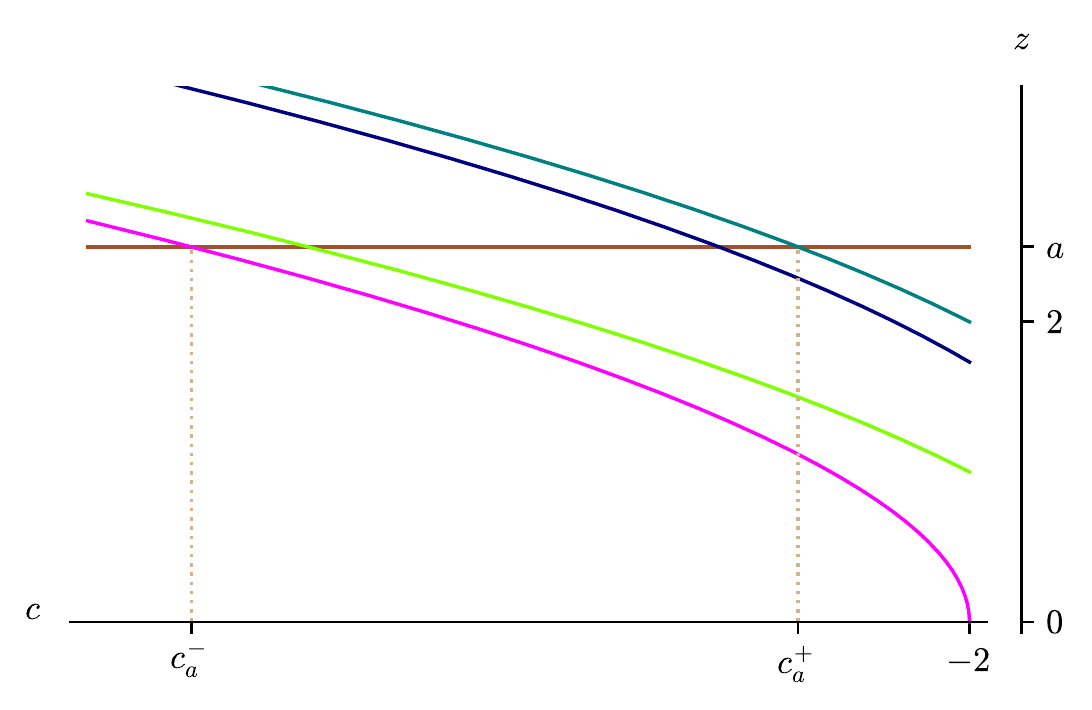}}
\caption{Graphs of the maps $\zeta_{\boldsymbol{\varepsilon}}$, with $\boldsymbol{\varepsilon} \in \sxvs{\sgn(a)}{2}{1}$, when $a \in [2, +\infty)$.}
\label{figure:parashift1}
\end{figure}

\begin{figure}
\fbox{\includegraphics{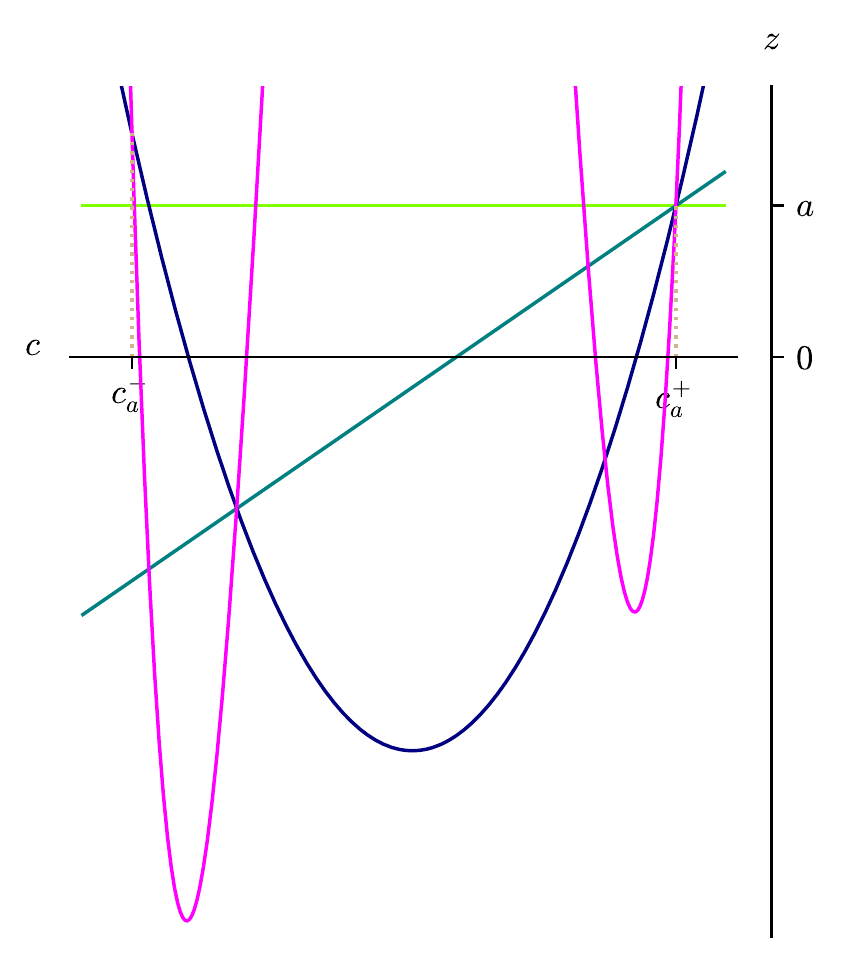}}
\caption{Graphs of the maps $c \mapsto F_{n}(c, a)$, with $n \in \lbrace 0, \dotsc, 3 \rbrace$, when $a \in [2, +\infty)$.}
\label{figure:parashift2}
\end{figure}

\begin{claim}
\label{claim:parashift}
If $a \in (-\infty, -2] \cup [2, +\infty)$ and $\gamma \in (-\infty, -2]$, then $a$ has at most one preimage under $\psi_{\gamma}$.
\end{claim}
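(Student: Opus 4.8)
The plan is to show that if $a$ had two distinct preimages $\boldsymbol{\varepsilon} \neq \boldsymbol{\varepsilon}'$ under $\psi_{\gamma}$ with $\psi_{\gamma}(\boldsymbol{\varepsilon}) = \psi_{\gamma}(\boldsymbol{\varepsilon}') = a$, then the value $a$ would be forced to equal $0$ (or more precisely, would be forced into the orbit of $0$ under $f_{\gamma}$), contradicting $\lvert a \rvert \geq 2$. First I would recall from Proposition~\ref{proposition:dynashift} that $\psi_{\gamma}$ is injective whenever $\gamma \in (-\infty, -2)$; so the only case where the claim could fail is $\gamma = -2$, and I would then invoke Proposition~\ref{proposition:dynapart}. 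That proposition describes exactly when $\psi_{-2}(\boldsymbol{\varepsilon}) = \psi_{-2}(\boldsymbol{\varepsilon}')$ for $\boldsymbol{\varepsilon} \neq \boldsymbol{\varepsilon}'$: there must be an integer $k \geq 2$ with $\epsilon_{k-1} = \epsilon'_{k-1} = -1$, $\epsilon_{k} = \epsilon'_{k} = 1$, and (from the proof) $f_{-2}^{\circ k}\left( \psi_{-2}(\boldsymbol{\varepsilon}) \right) = 0$.

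The key step is then the observation that $f_{-2}^{\circ k}(a) = 0$ for some $k \geq 2$ is impossible when $\lvert a \rvert \geq 2$: indeed, $f_{-2}$ maps $\{z : \lvert z \rvert \geq 2\} \cap \mathbb{R} = (-\infty, -2] \cup [2, +\infty)$ into $[2, +\infty)$ (since $z^{2} - 2 \geq 2$ when $\lvert z \rvert \geq 2$), so by induction $f_{-2}^{\circ k}(a) \geq 2$ for all $k \geq 0$ whenever $\lvert a \rvert \geq 2$, and in particular $f_{-2}^{\circ k}(a) \neq 0$. Hence no such pair $\boldsymbol{\varepsilon} \neq \boldsymbol{\varepsilon}'$ can have $\psi_{-2}(\boldsymbol{\varepsilon}) = \psi_{-2}(\boldsymbol{\varepsilon}') = a$, which establishes the claim for $\gamma = -2$ as well.

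I do not anticipate a serious obstacle here: the claim is essentially an immediate corollary of Propositions~\ref{proposition:dynashift} and~\ref{proposition:dynapart} combined with the elementary forward-invariance of $[2,+\infty)$ under $f_{-2}$. The only point requiring a little care is extracting from the proof of Proposition~\ref{proposition:dynapart} the fact that a collision at $-2$ forces the common value $a = \psi_{-2}(\boldsymbol{\varepsilon})$ to satisfy $f_{-2}^{\circ k}(a) = 0$ for some $k \geq 0$; once that is in hand, the contradiction with $\lvert a \rvert \geq 2$ is immediate. Alternatively, and perhaps more cleanly, one can argue directly from the defining property $\epsilon_{n}\, \psi_{\gamma}(\boldsymbol{\varepsilon}) \geq 0$ for all $n$: if $\psi_{\gamma}(\boldsymbol{\varepsilon}) = \psi_{\gamma}(\boldsymbol{\varepsilon}') = a$ with $a \neq 0$, then the sign of $f_{\gamma}^{\circ n}(a)$ is determined whenever $f_{\gamma}^{\circ n}(a) \neq 0$, and since $f_{-2}^{\circ n}(a) \geq 2 > 0$ for all $n$ when $\lvert a \rvert \geq 2$, every $\epsilon_{n}$ and $\epsilon'_{n}$ is forced to be $+1$, giving $\boldsymbol{\varepsilon} = \boldsymbol{\varepsilon}'$.
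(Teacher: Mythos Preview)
Your proposal is correct and follows the same overall outline as the paper: dispose of $\gamma\in(-\infty,-2)$ via the injectivity of $\psi_{\gamma}$ from Proposition~\ref{proposition:dynashift}, then handle $\gamma=-2$ separately. The treatment of $\gamma=-2$ differs in detail. The paper first uses the range bound $\lvert \psi_{-2}(\boldsymbol{\varepsilon})\rvert\le\beta_{-2}=2$ to force $\lvert a\rvert=2$, so $a=\sgn(a)\,\beta_{-2}$, and then checks via Proposition~\ref{proposition:dynapart} that this particular value has a unique preimage (the sequence in $\sxvs{\sgn(a)}{1}{1}$ with $\epsilon_{1}=1$). Your main route instead extracts from the proof of Proposition~\ref{proposition:dynapart} that a collision at $-2$ forces $f_{-2}^{\circ k}(a)=0$, contradicted by forward invariance of $[2,+\infty)$. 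Your alternative route is the most economical of the three: since $\lvert f_{-2}^{\circ n}(a)\rvert\ge 2$ for all $n\ge 0$, the sign condition $\epsilon_{n}\,f_{-2}^{\circ n}\bigl(\psi_{-2}(\boldsymbol{\varepsilon})\bigr)\ge 0$ pins down every $\epsilon_{n}$, so Proposition~\ref{proposition:dynapart} is not even needed. Two small slips to correct: the defining inequality should read $\epsilon_{n}\,f_{\gamma}^{\circ n}\bigl(\psi_{\gamma}(\boldsymbol{\varepsilon})\bigr)\ge 0$, not $\epsilon_{n}\,\psi_{\gamma}(\boldsymbol{\varepsilon})\ge 0$; and when $a\le -2$ one has $\epsilon_{0}=-1$ rather than $+1$ (only $\epsilon_{n}=+1$ for $n\ge 1$), though this does not affect the conclusion $\boldsymbol{\varepsilon}=\boldsymbol{\varepsilon}'$.
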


\begin{proof}[Proof of Claim~\ref{claim:parashift}]
If $\gamma \in (-\infty, -2)$, then the map $\psi_{\gamma}$ is injective.

If $\gamma = -2$ and $\boldsymbol{\varepsilon} \in \sxu$ satisfies $\psi_{\gamma}(\boldsymbol{\varepsilon}) = a$, then we have \[ 2 \leq \lvert a \rvert = \left\lvert \psi_{-2}(\boldsymbol{\varepsilon}) \right\rvert \leq \beta_{-2} = 2 \, \text{,} \] so $\psi_{-2}(\boldsymbol{\varepsilon}) = \sgn(a) \beta_{-2}$, and, by Proposition~\ref{proposition:dynapart}, it follows that $\boldsymbol{\varepsilon}$ is the sign sequence in $\sxvs{\sgn(a)}{1}{1}$ given by $\epsilon_{1} = 1$. Thus, the claim is proved.
\end{proof}

\begin{proof}[Proof of Theorem~\ref{theorem:parashift}]
For every $\boldsymbol{\varepsilon} \in \sxus{\sgn(a)}$, we have \[ \sgn(a) \zeta_{\boldsymbol{\varepsilon}}\left( c_{a}^{-} \right) \geq \sqrt{-\beta_{c_{a}^{-}} -c_{a}^{-}} = \lvert a \rvert \quad \text{and} \quad \sgn(a) \zeta_{\boldsymbol{\varepsilon}}\left( c_{a}^{+} \right) \leq \beta_{c_{a}^{+}} = \lvert a \rvert \, \text{,} \] and hence, by the intermediate value theorem, there exists $\gamma_{a}(\boldsymbol{\varepsilon}) \in \left[ c_{a}^{-}, c_{a}^{+} \right]$ such that $\zeta_{\boldsymbol{\varepsilon}}\left( \gamma_{a}(\boldsymbol{\varepsilon}) \right) = a$. Now, note that, if $\boldsymbol{\varepsilon} \in \sxvs{\sgn(a)}{k}{p}$~-- with $k \geq 0$ and $p \geq 1$~-- and $\gamma \in (-\infty, -2]$ satisfy $\zeta_{\boldsymbol{\varepsilon}}(\gamma) = a$, then $\boldsymbol{\varepsilon}$ is a preimage of $a$ under $\psi_{\gamma}$, and in particular $\gamma \in \sv{a}{k}{p}$. Therefore, by Claim~\ref{claim:parashift}, the map $\gamma_{a}$ so defined is injective, and, as $\sv{a}{k}{p}$ contains at most $2^{k +p -1}$ elements, it follows that $\gamma_{a}\left( \sxvs{\sgn(a)}{k}{p} \right) = \sv{a}{k}{p}$, for all $k \geq 0$ and $p \geq 1$. Thus, for every $\boldsymbol{\varepsilon} \in \sxus{\sgn(a)}$, $\gamma_{a}(\boldsymbol{\varepsilon})$ is the unique parameter $\gamma \in (-\infty, -2]$ that satisfies $\zeta_{\boldsymbol{\varepsilon}}(\gamma) = a$. This completes the proof of the theorem.
\end{proof}

\begin{remark}
Applying Montel's theorem, it follows from Theorem~\ref{theorem:parashift} that, for every $a \in (-\infty, -2] \cup [2, +\infty)$, the set of parameters $c \in \mathbb{C}$ for which the point $a$ has bounded forward orbit under $f_{c}$ is also contained in the line segment $\left[ c_{a}^{-}, c_{a}^{+} \right]$.
\end{remark}

Note that, when $a$ is an integer, the set $\su{a}$ has the following arithmetic property:

\begin{proposition}
\label{proposition:parathmetic}
For every $a \in \mathbb{Z}$, the set $\su{a}$ is contained in the set of algebraic integers and is invariant under the action of $\Gal\left( \overline{\mathbb{Q}} / \mathbb{Q} \right)$.
\end{proposition}

\begin{proof}
For all $k \geq 0$ and $p \geq 1$, the polynomial $F_{k +p}(c, a) -F_{k}(c, a)$ is monic with integer coefficients since $a \in \mathbb{Z}$. Thus, the proposition is proved.
\end{proof}

We shall now prove Theorem~\ref{theorem:mainthm}, which we recall below.

\begin{maintheorem}
Assume that $a$ and $b$ are two integers with $\lvert b \rvert > \lvert a \rvert$. Then 
\begin{itemize}
\item either $a = 0$, $\lvert b \rvert = 1$ and $\su{a} \cap \su{b} = \lbrace -2, -1, 0 \rbrace$,
\item or $a = 0$, $\lvert b \rvert = 2$ and $\su{a} \cap \su{b} = \lbrace -2 \rbrace$,
\item or $\lvert a \rvert \geq 1$, $\lvert b \rvert = \lvert a \rvert +1$ and $\su{a} \cap \su{b} = \left\lbrace -a^{2} -\lvert a \rvert -1, -a^{2} -\lvert a \rvert \right\rbrace$,
\item or $\lvert b \rvert > \max\left\lbrace 2, \lvert a \rvert +1 \right\rbrace$ and $\su{a} \cap \su{b} = \varnothing$.
\end{itemize}
\end{maintheorem}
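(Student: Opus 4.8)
The plan is to exploit the rigidity coming from Theorem~\ref{theorem:parashift}: when $\lvert b\rvert\ge 2$, the set $\su{b}$ lies in a very short real interval, so $\su{a}\cap\su{b}$ is forced into that interval, and then arithmetic (Proposition~\ref{proposition:parathmetic}) together with a Galois-conjugate argument pins down the finitely many possibilities. Concretely, I would first reduce to the case $a,b\ge 0$ using the symmetries $\su{a}=\su{-a}$, so that $b>a\ge 0$ and $b\ge 2$ (the constraint $\lvert b\rvert>\lvert a\rvert$ with both nonnegative integers gives $b\ge 1$; the case $b=1$, i.e. $a=0$, $\lvert b\rvert=1$, is Buff's theorem and I would either invoke it or reprove it in the same style). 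From Theorem~\ref{theorem:parashift}, $\su{b}\subset[c_b^-,c_b^+]=[-b^2-\sqrt{b^2+1}-1,\ -b^2+b]$, an interval of length $b+\sqrt{b^2+1}+1<2b+2$. Hence every element of $\su{a}\cap\su{b}$ is a real algebraic integer lying in $[c_b^-,c_b^+]$.

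The key step is a Galois/conjugate bound. Let $c\in\su{a}\cap\su{b}$; by Proposition~\ref{proposition:parathmetic} applied to both $a$ and $b$, $c$ is an algebraic integer all of whose Galois conjugates $c'$ again lie in $\su{a}\cap\su{b}$ (since both $\su{a}$ and $\su{b}$ are Galois-stable), hence in $[c_b^-,c_b^+]$. So every conjugate of $c$ has absolute value at most $R_b=b^2+\sqrt{b^2+1}+1$; but I want a sharper two-sided estimate. Writing $c=-b^2+t$ with $t\in[-\sqrt{b^2+1}-1,\ b]$ for every conjugate, the product of conjugates (the norm, an integer) has absolute value at least... — here I would instead apply the same localization to $a$: by Proposition~\ref{proposition:paraloc}, $\lvert c\rvert\le R_a=a^2+\sqrt{a^2+1}+1$, so for every conjugate $c'$ of $c$ we get $\lvert c'\rvert\le R_a$. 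Combining with $c'\in[c_b^-,c_b^+]$: if $c_b^-<-R_a$, i.e. $b^2+\sqrt{b^2+1}+1>a^2+\sqrt{a^2+1}+1$ — which always holds when $b>a$ — the binding constraint is $c'\ge -R_a=-(a^2+\sqrt{a^2+1}+1)$ together with $c'\le c_b^+=-b^2+b$. For this interval to be nonempty we need $-b^2+b\ge -a^2-\sqrt{a^2+1}-1$, i.e. $b^2-b\le a^2+\sqrt{a^2+1}+1$. Since $a<b$ are integers this forces $b\le a+1$ (for $b\ge a+2$ one checks $b^2-b\ge(a+2)^2-(a+2)=a^2+3a+2>a^2+\sqrt{a^2+1}+1$ as soon as $a\ge 1$, and directly when $a=0$, $b\ge 3$). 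This is exactly the dichotomy in the theorem: $\su{a}\cap\su{b}=\varnothing$ unless $\lvert b\rvert=\lvert a\rvert+1$ (or the small exceptional cases $a=0$, $\lvert b\rvert\in\{1,2\}$, where $\max\{2,\lvert a\rvert+1\}$ kicks in).

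It remains to handle the boundary case $b=a+1$ with $a\ge 1$ (and separately $a=0,b=2$). Here the admissible interval for every conjugate $c'$ of $c\in\su{a}\cap\su{b}$ has collapsed almost to a point: $c'\in[-R_a,\,c_b^+]=[-a^2-\sqrt{a^2+1}-1,\ -a^2-a]$, an interval of length $\sqrt{a^2+1}+1-a\in(1,2)$ containing the two integers $-a^2-a$ and $-a^2-a-1$. Since $c$ is an algebraic integer with all conjugates in an interval of length less than $2$, I would argue that $c$ itself is a rational integer (a real algebraic integer all of whose conjugates lie in a length-$<4$ interval need not be rational in general, so I need the finer input): actually the clean way is — every conjugate lies in $\{t\in\mathbb R: -a^2-\sqrt{a^2+1}-1\le t\le -a^2-a\}$, and I claim the only algebraic integers with all conjugates there are $-a^2-a$ and $-a^2-a-1$ themselves. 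For this I would use that $c_a^-=-a^2-\sqrt{a^2+1}-1$ is not an integer and that by Theorem~\ref{theorem:parashift} the extreme element $c_a^-\in\sv{a}{2}{1}$ is the unique parameter in $\su{a}$ with minimal real value; more simply, shift by adding $a^2+a+1$: the conjugates of $c+a^2+a+1$ lie in $[-\sqrt{a^2+1}-1+a+a^2-a^2-a,\ 1]$... — let me instead invoke Kronecker-type reasoning directly: $c+a^2+a$ is an algebraic integer whose conjugates all lie in $[a-\sqrt{a^2+1}-1,\ 0]\subset(-2,0]$, hence in $[-1,0]$ after noting the only integers there are $0,-1$ and that an algebraic integer with all conjugates in $(-2,0]$ of degree $\ge 2$ would have conjugates escaping — cleaner: the minimal polynomial has constant term $\pm\prod(\text{conjugates})$, an integer of absolute value $<(\sqrt{a^2+1}+1-a)^{\deg}$, forcing... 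This is the step I expect to be fussiest. The robust fallback, and the approach I would actually carry out, is: directly verify $-a^2-a,\,-a^2-a-1\in\su{a}\cap\su{a+1}$ via Example~\ref{example:genexv}, then show any other $c\in\su{a}\cap\su{b}$ would have a Galois conjugate that is a \emph{non-integer} algebraic number in the length-$<2$ interval $[c_a^-,c_b^+]$ while simultaneously being an algebraic integer — using that $c$ and all its conjugates lie in this interval, the trace $\mathrm{Tr}(c)\in\mathbb Z$ is within $\deg(c)$ of $\deg(c)\cdot(-a^2-a)$, forcing $c+a^2+a$ to have trace $0$ or $-\deg$, i.e. all conjugates equal (the interval being too short to accommodate a nonconstant Galois orbit with integer trace unless it degenerates) — hence $c\in\{-a^2-a,-a^2-a-1\}$. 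The main obstacle is making this last "short interval $\Rightarrow$ rational integer" step airtight; everything else is a direct application of Theorem~\ref{theorem:parashift}, Proposition~\ref{proposition:paraloc}, Proposition~\ref{proposition:parathmetic}, and the explicit examples, plus the reduction to $b=a+1$ by the length comparison above.
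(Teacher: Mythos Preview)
Your overall strategy is exactly the paper's: combine the interval localization $\su{b}\subset[c_b^-,c_b^+]$ from Theorem~\ref{theorem:parashift}, the disk bound $\lvert c\rvert\le R_a$ from Proposition~\ref{proposition:paraloc}, and Galois-stability plus integrality from Proposition~\ref{proposition:parathmetic}, then compare $c_b^+$ with $-R_a$ to reduce to the boundary case $\lvert b\rvert=\lvert a\rvert+1$. Your computations for that reduction are correct.

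The gap is precisely where you flag it: the ``short interval $\Rightarrow$ rational integer'' step. Your trace argument does not work --- knowing that $c+a^2+a$ has all conjugates in an interval of length less than $2$ and integer trace does not by itself force all conjugates to coincide (there is no general principle that a nonconstant Galois orbit cannot fit in such an interval with integer trace). Your norm attempt is also applied to the wrong shift: with conjugates of $c+a^2+a$ in $(-2,0]$, the product can have absolute value up to $2^{\deg}$, so nothing is forced.

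The fix is a one-line further shift, and this is exactly what the paper isolates as Lemma~\ref{lemma:mainthm}: set $\alpha=c+a^2+a+1$ instead. Since $-R_a>c_b^+-2$ (equivalently $\sqrt{a^2+1}<a+1$), every conjugate of $c$ lies in $(c_b^+-2,\,c_b^+]$ with $c_b^+=-a^2-a\in\mathbb{Z}$, so every conjugate of $\alpha$ lies in $(-1,1]$. Then the norm $\prod_j\alpha_j$ is an integer in $(-1,1]$, hence $0$ or $1$; in the first case some $\alpha_j=0$ so $\alpha=0$, in the second all $\lvert\alpha_j\rvert=1$ so all $\alpha_j=1$. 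This gives $c\in\{-a^2-a-1,\,-a^2-a\}$ and closes the argument.
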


\begin{lemma}
\label{lemma:mainthm}
Assume that $m \in \mathbb{Z}$ and $c$ is an algebraic integer whose all Galois conjugates lie in the interval $(m -2, m]$. Then $c = m -1$ or $c = m$.
\end{lemma}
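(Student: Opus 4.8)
The plan is to translate the interval to $(-1, 1]$ and then exploit the fact that a nonzero algebraic integer has norm of absolute value at least $1$. Concretely, I would set $\beta = c -(m -1)$. Since $c$ is an algebraic integer and $m -1 \in \mathbb{Z}$, the number $\beta$ is an algebraic integer, and its Galois conjugates are precisely the numbers $c^{\prime} -(m -1)$ where $c^{\prime}$ runs over the conjugates of $c$; by hypothesis they all lie in $(-1, 1]$. As the conclusion $c \in \lbrace m -1, m \rbrace$ is equivalent to $\beta \in \lbrace 0, 1 \rbrace$, it suffices to prove the latter.

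Next I would argue by contradiction, assuming that $\beta$ has degree $n \geq 2$ over $\mathbb{Q}$. Then its conjugates $\beta_{1}, \dotsc, \beta_{n}$ are pairwise distinct real numbers in $(-1, 1]$, and none of them can equal $1$: a conjugate equal to $1$ would force the minimal polynomial of $\beta$ over $\mathbb{Q}$, being monic, irreducible and divisible by $x -1$, to be $x -1$, contradicting $n \geq 2$. Hence every $\beta_{j}$ lies in $(-1, 1)$, so $\prod_{j = 1}^{n} \lvert \beta_{j} \rvert < 1$. But $\prod_{j = 1}^{n} \beta_{j}$ is, up to sign, the constant coefficient of the minimal polynomial of $\beta$, and so it is an integer; being a non-negative integer strictly less than $1$, the quantity $\prod_{j = 1}^{n} \lvert \beta_{j} \rvert$ must therefore be $0$. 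Consequently some $\beta_{j}$ vanishes, which forces the minimal polynomial of $\beta$ to be $x$ and hence $n = 1$ — a contradiction.

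It would then follow that $\beta$ has degree $1$, that is, $\beta \in \mathbb{Z}$; and since $\beta \in (-1, 1]$, this gives $\beta \in \lbrace 0, 1 \rbrace$, so $c = m -1$ or $c = m$. I do not expect a serious obstacle: the argument is short, and the only point to handle with care is the behaviour at the endpoints of $(m -2, m]$. Its right end is closed, which is accommodated by the observation that a rational conjugate forces degree $1$; its left end is open, which is precisely what rules out the value $c = m -2$ (equivalently $\beta = -1$), and I would make sure the write-up makes this asymmetry explicit. As an alternative to the norm computation, one could instead invoke Kronecker's theorem — an algebraic integer all of whose conjugates lie in the closed unit disk is $0$ or a root of unity — together with the reality of the conjugates and the exclusion of $-1$; but the elementary norm argument above fits the self-contained spirit of the paper better.
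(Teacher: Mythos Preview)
Your proposal is correct and follows essentially the same approach as the paper: translate by $m-1$ to put the conjugates in $(-1,1]$ and use that the product of conjugates is an integer of absolute value at most $1$. The paper's write-up is a bit more direct---it simply observes $\prod_{j}\alpha_{j}\in\{0,1\}$ and reads off $\alpha=0$ or all $\alpha_{j}=1$ without the detour through a degree-$n\geq 2$ contradiction---but the underlying idea is identical.
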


\begin{proof}[Proof of Lemma~\ref{lemma:mainthm}]
Set $\alpha = c -m +1$. Then $\alpha$ is an algebraic integer whose all Galois conjugates $\alpha_{1}, \dotsc, \alpha_{d}$ lie in the interval $(-1, 1]$. Therefore, we have \[ \prod_{j = 1}^{d} \alpha_{j} \in (-1, 1] \cap \mathbb{Z} = \lbrace 0, 1 \rbrace \, \text{,} \] and it follows that either $\alpha_{j} = 0$ for some $j \in \lbrace 1, \dotsc, d \rbrace$, which yields $\alpha = 0$, or $\alpha_{j} = 1$ for all $j \in \lbrace 1, \dotsc, d \rbrace$. Thus, either $c = m -1$ or $c = m$.
\end{proof}

\begin{proof}[Proof of Theorem~\ref{theorem:mainthm}]
For a proof of the case $a = 0$ and $\lvert b \rvert = 1$, we refer the reader to~\cite[Proposition~6]{B2018}.

Thus, we may assume that $\lvert b \rvert \geq 2$. By Proposition~\ref{proposition:paraloc}, Theorem~\ref{theorem:parashift} and Proposition~\ref{proposition:parathmetic}, the set $\su{a} \cap \su{b}$ is contained in the set of algebraic integers, is invariant under the action of $\Gal\left( \overline{\mathbb{Q}} / \mathbb{Q} \right)$ and satisfies \[ \su{a} \cap \su{b} \subset \left\lbrace c \in \mathbb{C} : \lvert c \rvert \leq R_{a} \right\rbrace \cap \left[ c_{b}^{-}, c_{b}^{+} \right] \, \text{.} \]

Suppose that $a = 0$. Then we have \[ c_{b}^{+} = -b^{2} +\lvert b \rvert \leq -2 = -R_{a} \, \text{,} \] with equality if and only if $\lvert b \rvert = 2$. Therefore, $\su{a} \cap \su{b} \subset \lbrace -2 \rbrace$ if $\lvert b \rvert = 2$ and $\su{a} \cap \su{b} = \varnothing$ otherwise. Conversely, observe that $-2 \in \sv{a}{2}{1} \cap \sv{b}{1}{1}$ when $\lvert b \rvert = 2$.

Now, suppose that $\lvert a \rvert \geq 1$. Then we have \[ c_{b}^{+} -2 < -R_{a} = -a^{2} -\sqrt{a^{2} +1} -1 < -a^{2} -\lvert a \rvert = c_{b}^{+} \quad \text{if} \quad \lvert b \rvert = \lvert a \rvert +1 \] and \[ c_{b}^{+} = -b^{2} +\lvert b \rvert < -a^{2} -\sqrt{a^{2} +1} -1 = -R_{a} \quad \text{if} \quad \lvert b \rvert \geq \lvert a \rvert +2 \, \text{.} \] Therefore, $\su{a} \cap \su{b} \subset \left\lbrace -a^{2} -\lvert a \rvert -1, -a^{2} -\lvert a \rvert \right\rbrace$ if $\lvert b \rvert = \lvert a \rvert +1$ by Lemma~\ref{lemma:mainthm} and $\su{a} \cap \su{b} = \varnothing$ otherwise. Conversely, observe that $-a^{2} -\lvert a \rvert -1 \in \sv{a}{1}{2} \cap \sv{b}{1}{2}$ and $-a^{2} -\lvert a \rvert \in \sv{a}{1}{1} \cap \sv{b}{1}{1}$ when $\lvert b \rvert = \lvert a \rvert +1$. Thus, the theorem is proved.
\end{proof}

\providecommand{\bysame}{\leavevmode\hbox to3em{\hrulefill}\thinspace}
\providecommand{\MR}{\relax\ifhmode\unskip\space\fi MR }
\providecommand{\MRhref}[2]{%
\href{http://www.ams.org/mathscinet-getitem?mr=#1}{#2}
}
\providecommand{\href}[2]{#2}

\end{document}